\documentclass[reqno]{amsart}
\headheight=8pt     \topmargin=0pt \textheight=624pt
\textwidth=432pt \oddsidemargin=18pt \evensidemargin=18pt
\usepackage[english]{babel}
\usepackage{amssymb,enumerate,url}
\usepackage{palatino,mathpazo}
\usepackage[pagebackref,colorlinks]{hyperref}
\usepackage{autobreak}

\newcommand{\llangle}{{\langle\!\langle}}
\newcommand{\mathLaplace}{\Delta}

\newcommand{\rrangle}{{\rangle\!\rangle}}
\newcommand{\tmop}[1]{\ensuremath{\operatorname{#1}}}
\newcommand{\tmtextit}[1]{{\emph{#1}}}

\newenvironment{enumeratenumeric}{\begin{enumerate}[1.] }{\end{enumerate}}
\newtheorem{theorem}{Theorem}[section]

\newtheorem{definition}[theorem]{Definition}
\newtheorem{lemma}[theorem]{Lemma}
\newtheorem{proposition}[theorem]{Proposition}
{\theoremstyle{remark}\newtheorem{remark}[theorem]{Remark}}

\begin{document}
\begin{sloppypar}
\title[Converse to Skoda's $L^2$ Division Theorem]{A Converse to the Skoda $L^2$ Division Theorem}

\author[Z. Li]{Zhi Li}
\address{Zhi Li: School of Science, Beijing University of Posts and Telecommunications, Beijing 100876, China and Key Laboratory of Mathematics and Information Networks (Beijing University of
Posts and Telecommunications), Ministry of Education, China}
\email{lizhi@amss.ac.cn, lizhi10@foxmail.com}

\author[X. Meng]{Xiankui Meng}
\address{Xiankui Meng: School of Science, Beijing University of Posts and Telecommunications, Beijing 100876, China and Key Laboratory of Mathematics and Information Networks (Beijing University of
Posts and Telecommunications), Ministry of Education, China}
\email{mengxiankui@amss.ac.cn}

\author[J. Ning]{Jiafu Ning}
\address{Jiafu Ning: Department of Mathematics, Central South University, Changsha, Hunan 410083, China}
\email{jfning@csu.edu.cn}

\author[X. Zhou]{Xiangyu Zhou}
\address{Xiangyu Zhou: Institute of Mathematics\\Academy of Mathematics and Systems Sciences\\and Hua Loo-Keng Key
	Laboratory of Mathematics\\Chinese Academy of
	Sciences\\Beijing\\100190\\China}
\email{xyzhou@math.ac.cn}

\keywords{Skoda's $L^2$ division, plurisubharmonic functions, $\bar{\partial}$-equations, Gauss-Codazzi formula.}
\subjclass[2020]{32A70, 32C99, 32F17, 32W05}

\begin{abstract}
 In this paper, we present a converse to a version of Skoda's $L^2$ division theorem by investigating the solvability of $\bar{\partial}$ equations of a specific type.
\end{abstract}

{\maketitle}

\section{Introduction}

The present paper is devoted to a converse to the Skoda $L^2$ division
theorem. Consider a pseudoconvex domain $D \subset \mathbb{C}^n$ and let $f$
be a holomorphic function on $D$. Let $g_1, g_2, \ldots, g_{k}$ be $k$ holomorphic functions with no common zeros in $D$. The classical division problem
is to find holomorphic functions $h_1, h_2, \ldots, h_{k}$ such that
\[ f  = g_1 h_1 + \cdots + g_{k} h_{k} . \]
To determine the condition under which a solution exits for the division
problem is an important question in several complex variables.
It is obtained by Oka {\cite{oka1950}} that for any holomorphic function $f$
which is locally in the ideal generated by $g_j$ $(1 \leqslant j \leqslant k)$, the classical division problem is always solvable on pseudoconvex domains.

It is natural to consider the division problem with $L^2$
estimates. Skoda {\cite{skoda1972}}
established the following well-known result, now referred as the Skoda $L^2$
division theorem, which can be regarded as the $L^2$ version of Oka's division
theorem.

\begin{theorem}[Skoda {\cite{skoda1972,skoda1978}}]
  \label{thm:skoda-l2-division}Let $D \subset \mathbb{C}^n$ be a pseudoconvex
  domain. If $\varphi$ is a plurisubharmonic function on $D$, then for any $g = (g_1,
  \ldots, g_{k}) \in (\mathcal{O} (D))^{\oplus k}$ with $g_1,
  \ldots, g_{k}$ have no common zeros and $\varepsilon > 0$, and any $f \in \mathcal{O} (D)$ satisfying
  \[ \int_D \frac{| f |^2}{| g |^2} e^{- \varphi - p (1 + \varepsilon) \log |
     g |^2} d V < \infty, \]
  there exists $h = (h_1, \ldots, h_{k}) \in (\mathcal{O} (D))^{\oplus k}$ such that $f =  g_1 h_1 + \cdots + g_{k} h_{k}$
  and
  \[ \int_D | h |^2 e^{- \varphi - p (1 + \varepsilon) \log | g |^2} d V
     \leqslant \left( 1 + \frac{1}{\varepsilon} \right) \int_D \frac{| f
     |^2}{| g |^2} e^{- \varphi - p (1 + \varepsilon) \log | g |^2} d V, \]
  where $| g |^2 = | g_1 |^2 + \cdots + | g_{k} |^2$, $| h |^2 = | h_1 |^2
  + \cdots + | h_{k} |^2$, $p = \min \{ n, k-1 \}$ and $d V$ is the Lebesgue
  measure on $\mathbb{C}^n$.
\end{theorem}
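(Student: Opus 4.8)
The plan is to convert the division problem into a $\bar\partial$-problem on a holomorphic subbundle and to solve it with an $L^2$ estimate whose positivity is supplied by the Gauss--Codazzi curvature formula. Throughout write $\alpha = 1+\varepsilon$ and $W = e^{-\varphi}|g|^{-2p\alpha}$, and let $\|\cdot\|_W$ denote the associated weighted $L^2$ norm. Since $g_1,\dots,g_k$ have no common zeros, the evaluation map $v\mapsto g\cdot v=\sum_j g_j v_j$ is a surjection of the trivial bundle $X = D\times\mathbb{C}^k$ onto $\mathbb{C}$, so its kernel $E = \{v : g\cdot v = 0\}$ is a holomorphic subbundle of rank $k-1$, with orthogonal complement (for the standard metric) the line spanned by $\bar g = (\bar g_1,\dots,\bar g_k)$.

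First I would write down the smooth (non-holomorphic) particular solution $h^0 = f\,\bar g/|g|^2$, which satisfies $g\cdot h^0 = f$ pointwise. Seeking the true solution as $h = h^0 - u$, the condition $g\cdot h = f$ forces $u$ to be a section of $E$, while holomorphicity of $h$ amounts to $\bar\partial u = \bar\partial h^0 =: \beta$. Because $g\cdot h^0 = f$ is holomorphic and the $g_j$ are holomorphic, one checks $g\cdot\beta = 0$ and $\bar\partial\beta = 0$, so $\beta$ is a $\bar\partial$-closed $(0,1)$-form with values in $E$. Moreover $h^0$ is a multiple of $\bar g$, hence pointwise orthogonal to $E$; taking $u$ to be the minimal $L^2$ solution (which lies in $E$) then yields the orthogonal splitting $\|h\|_W^2 = \|h^0\|_W^2 + \|u\|_W^2$, where $\|h^0\|_W^2 = \int_D \frac{|f|^2}{|g|^2}W\,dV$ is exactly the right-hand integrand. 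It therefore suffices to produce a solution with $\|u\|_W^2 \le \frac1\varepsilon\int_D \frac{|f|^2}{|g|^2}W\,dV$, which gives the factor $1 + \frac1\varepsilon$.

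The heart of the matter is the $L^2$ estimate for $\bar\partial u = \beta$ on $E$ weighted by $W$. I would equip $X$ with the metric $e^{-\psi}\langle\cdot,\cdot\rangle$, where $\psi = \varphi + p\alpha\log|g|^2$, so that $\Theta_X = i\partial\bar\partial\psi\otimes\mathrm{Id} \ge p\alpha\,(i\partial\bar\partial\log|g|^2)\,\mathrm{Id}$, the form $i\partial\bar\partial\log|g|^2\ge 0$ being the pullback of the Fubini--Study metric under $g$. By Gauss--Codazzi, $\Theta_E = \Theta_X|_E - A^*\wedge A$, where the second fundamental form $A\in\Lambda^{1,0}\otimes\mathrm{Hom}(E, X/E)$ has rank one in the bundle direction (as $X/E$ is a line), and whose nonnegative contribution $A^*\wedge A$ is measured by the same form $i\partial\bar\partial\log|g|^2$. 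The decisive point is that $\beta = \bar\partial h^0 = f\,\bar\partial(\bar g/|g|^2)$ is itself built from $A$, so in the Bochner--Kodaira--Nakano inequality the negative term $-A^*\wedge A$ is matched against the positive $p\alpha$-multiple of the identical Fubini--Study form. A rank/eigenvalue count --- which is where $p = \min\{n,k-1\}$ enters, with $k-1 = \mathrm{rank}\,E$ and $n$ the number of $(0,1)$-directions --- shows the second fundamental form costs precisely a factor $p$, so that the strictly larger coefficient $p\alpha = p(1+\varepsilon)$ leaves a net positivity proportional to $\varepsilon$. This is what both produces the bound $\|u\|_W^2 \le \frac1\varepsilon\|h^0\|_W^2$ and explains why $\varepsilon>0$ is indispensable, the constant degenerating as $\varepsilon\to 0$.

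Two points require care. The routine analytic one is that $W$ is singular along $\{g=0\}$ (there $\log|g|^2 = -\infty$) and $\varphi$ is only plurisubharmonic: I would regularize, replacing $\varphi$ by a smooth strictly plurisubharmonic approximation and $|g|^2$ by $|g|^2+\delta$, solve on a smoothly bounded strictly pseudoconvex exhaustion $D_\nu \Subset D$ with uniform estimates, and recover the solution on $D$ by a weak-limit/Montel argument, the uniform $L^2$ bound forcing holomorphicity of the limit and preserving the division identity. The conceptual obstacle, and the step I expect to be hardest, is the sharp curvature computation: verifying that the induced Nakano curvature of $E$ twisted by $W$, tested against $\beta$, is positive enough to run the estimate with exactly the constant $1+\frac1\varepsilon$. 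This is where the Gauss--Codazzi bookkeeping of the second fundamental form and the rank count producing $\min\{n,k-1\}$ must be carried out precisely rather than up to constants.
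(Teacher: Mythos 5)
Your proposal follows essentially the same route as the paper: reduce division to the $\bar{\partial}$-equation $\bar{\partial}u=\beta^{\ast}f$ for the kernel bundle $S=\operatorname{Ker}g$, control the Nakano-negative Gauss--Codazzi term $i\beta^{\ast}\wedge\beta$ by the weight $\psi=p(1+\varepsilon)\log|g|^2$ via Skoda's Lemme Fondamental (your ``rank count'' producing $p=\min\{n,k-1\}$, which the paper likewise cites rather than reproves), solve by the H\"ormander--Demailly $L^2$ estimate, and get the constant $1+\frac{1}{\varepsilon}$ from the pointwise orthogonal splitting $|h|^2=|g^{\ast}f|^2+|u|^2$ --- the paper merely factors this through Theorem \ref{thm:sharper-division} with a general weight $\psi$ before specializing. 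One small correction: since $g_1,\ldots,g_k$ have no common zeros, $\{g=0\}=\emptyset$ and $\log|g|^2$ is smooth on $D$, so your $|g|^2+\delta$ regularization is unnecessary (only the smoothing of $\varphi$ on a pseudoconvex exhaustion is needed), and any $S$-valued solution $u$ --- not just the minimal one --- yields the orthogonal splitting, since $g^{\ast}f\perp S$ pointwise.
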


The Skoda $L^2$ division theorem is important in several complex variables,
complex analytic and algebraic geometry. It is the main ingredient of
Brian{\c c}on-Skoda theorem {\cite{briancon-skoda1974}}. With Skoda's result,
Siu established the deformation invariance of plurigenera and the finite
generation of the canonical ring of complex projective manifolds of
general type (see {\cite{siu1998,siu2002,siu2004,siu2005,siu2009tech}}). These
results have had a profound impact in complex algebraic geometry. The Skoda
$L^2$ division theorem can be viewed as an effective analogue of Hilbert's
Nullstellensatz. Brownawell {\cite{brownawell1987}}, Ein-Lazarsfeld
{\cite{ein-lazarsfeld1999}} have provided effective versions of the
Nullstellensatz using Skoda's result.

The original proof of Theorem \ref{thm:skoda-l2-division} employed a
combination of functional analysis techniques and a specialized linear
algebraic approach. Subsequently, Skoda {\cite{skoda1978}} extended Theorem
\ref{thm:skoda-l2-division} to the $L^2$ surjectivity of bundle morphisms and
deduced the $L^2$ surjectivity to a $\bar{\partial}$-equation of the kernel
bundle. In fact, in Skoda's original paper {\cite{skoda1978}} (see also
{\cite{demailly-smallbook}}), a stronger version of Skoda's $L^2$ theorem is
obtained implicitly.  A similar
$L^2$ division result can also be found in {\cite{ohsawa-book2002}}.

In order to state the stronger version of Skoda's result,
let us recall some notations. Let $\underline{\mathbb{C}}^{k} \rightarrow D$ and $\underline{\mathbb{C}}
\rightarrow D$ denote the trivial vector bundle of rank $k$ over $D$ and
the trivial line bundle over $D$ respectively. Then $g = (g_1, g_2, \ldots,
g_{k})$ induced a morphism from $\underline{\mathbb{C}}^{k}$ to
$\underline{\mathbb{C}}$ by sending a holomorphic section $h$ of
$\underline{\mathbb{C}}^{k}$ to $g \cdot h:= g_1 h_1 + \cdots + g_{k} h_{k}$. For simplicity, we will
continue to denote this morphism by $g$ when there is no confusion.  If the components of $g=(g_1,\ldots,g_{k})$
have no common zeros, then we have the following exact sequence:
\[
0\rightarrow S \rightarrow \underline{\mathbb{C}}^{k}\xrightarrow{g} \underline{\mathbb{C}}\rightarrow 0,
\]
where $S=\operatorname{Ker}g$. Denote by $\beta=\left|g|^2(-\partial\left({\frac{g_1}{|g|^2}}\right),\ldots,-\partial\left({\frac{g_{k}}{|g|^2}}\right)\right)$
the second fundamental form of $S$ (cf. Section \ref{sec:prelimaries}).
Given a
holomorphic section $f$ of $\underline{\mathbb{C}}$, the division problem is
equivalent to find a holomorphic section $h$ of $\underline{\mathbb{C}}^{k}$ such that $g \cdot h = f$ (with $L^2$ control).

As usual, we write $d z=d z_1 \wedge \cdots \wedge d z_{n}$ and $\| u \|_{\varphi}^2=\int_D |u|^2 e^{-\varphi} d V$ for $u\in (L^2(D))^{\oplus k}$. Denote by $\Lambda$ the adjoint operator of $L=\omega\wedge\cdot$ and $\omega$ the standard Euclidean metric on $\mathbb{C}^n$. We denote
\[
I_{\varphi,\psi, g}(f)=\int_D \left\{\frac{1}{| g |^2}  +
     \left( \left[ i \partial \bar{\partial} \psi \otimes
    \tmop{Id}_{\underline{\mathbb{C}}^{k}} + i \beta^{\ast} \wedge \beta,
    \Lambda \right]^{- 1} \beta^{\ast} \wedge d z, \beta^{\ast} \wedge d z \right)\right\}|f|^2 e^{- \varphi
    - \psi} d V.
\]
Before we state the stronger version of Skoda's $L^2$ division theorem (which implies Theorem \ref{thm:skoda-l2-division}), we have the following definition:
\begin{definition}
Let $D\subset\mathbb{C}^n$ be a domain and $\varphi\in L_{\operatorname{loc}}^1(D)$. We call a holomorphic function $f\in \mathcal{O}(D)$ is $L^2$ divisible with respect to $\varphi$ if for any $g = (g_1,
  \ldots, g_{k}) \in (\mathcal{O} (D))^{\oplus k}$ and any smooth plurisubharmonic function $\psi$ on $D$ satisfy the following conditions:

  \begin{enumerate}
  \item $g_1,  g_{2,} \ldots, g_{k}$ have no common zeros in $D$;
  \item $i \partial \bar{\partial} \psi \otimes
     \tmop{Id}_{\underline{\mathbb{C}}^{k}} + i \beta^{\ast} \wedge \beta
     > 0 $ in the sense of Nakano;
  \item $I_{\varphi,\psi,g}(f)<\infty$,
\end{enumerate}
there exists $h \in (\mathcal{O} (D))^{\oplus k}$ such that $f = g
  \cdot h$ and
  \[
  \|h\|^2_{\varphi+\psi}\leqslant I_{\varphi,\psi,g}(f).
  \]
\end{definition}
With the above definition, the stronger version of Theorem \ref{thm:skoda-l2-division} obtained by Skoda \cite{skoda1978} can be formulated as follows.
\begin{theorem}[Skoda {\cite{skoda1978}}]
  \label{thm:sharper-division} Let $D\subset\mathbb{C}^n$ be a pseudoconvex domain. If $\varphi$ is plurisubharmonic on $D$, then any $f\in\mathcal{O}(D)$ is $L^2$ divisible with respect to $\varphi$.
\end{theorem}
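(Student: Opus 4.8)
The plan is to reduce the division problem to a $\bar{\partial}$-equation on the kernel bundle $S=\tmop{Ker}g$ and to solve that equation with the sharp $L^2$ estimate dictated by the Nakano-positive curvature supplied by hypothesis (2). First I would introduce the pointwise-minimal smooth solution $h_0=f\bar{g}/|g|^2$, i.e. $(h_0)_j=f\bar{g}_j/|g|^2$, which satisfies $g\cdot h_0=f$ and $|h_0|^2=|f|^2/|g|^2$, and which lies pointwise in the line $\mathbb{C}\bar{g}=S^{\perp}$. Since $f$ is holomorphic, $g\cdot\bar{\partial}h_0=\bar{\partial}(g\cdot h_0)=\bar{\partial}f=0$, so $v:=\bar{\partial}h_0$ is a $\bar{\partial}$-closed $(0,1)$-form with values in $S$. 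If I can produce an $S$-valued $L^2$ function $u$ with $\bar{\partial}u=v$, then $h:=h_0-u$ is holomorphic, satisfies $g\cdot h=g\cdot h_0-g\cdot u=f$ (because $u$ is a section of $S$), and by the pointwise orthogonality $h_0\perp u$ one has $|h|^2=|h_0|^2+|u|^2$. Hence $\|h\|^2_{\varphi+\psi}=\int_D |f|^2|g|^{-2}e^{-\varphi-\psi}\,dV+\|u\|^2_{\varphi+\psi}$, whose first summand is precisely the first term of $I_{\varphi,\psi,g}(f)$.

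It remains to solve $\bar{\partial}u=v$ with the correct bound on $\|u\|$. A direct computation from the definition of $\beta$ gives $\bar{\partial}h_0=-\tfrac{f}{|g|^2}\beta^{\ast}$, so the datum $v\wedge dz$ is a multiple of $\beta^{\ast}\wedge dz$ and, paired against the inverse curvature operator below, reproduces the second integrand of $I_{\varphi,\psi,g}(f)$. I would equip $S$ with the metric induced from the standard metric on $\underline{\mathbb{C}}^{k}$ and solve with the weight $e^{-\varphi-\psi}$. By the Gauss-Codazzi formula of Section~\ref{sec:prelimaries} together with the flatness of $\underline{\mathbb{C}}^{k}$, the Chern curvature of the twisted bundle is $i\partial\bar{\partial}\psi\otimes\tmop{Id}_{\underline{\mathbb{C}}^{k}}+i\beta^{\ast}\wedge\beta$, which is Nakano-positive exactly by hypothesis (2); the plurisubharmonic weight $\varphi$ adds only a further nonnegative term, which I will discard to recover the operator appearing in $I_{\varphi,\psi,g}(f)$ (more positivity gives a smaller inverse, hence a weaker, still valid, upper bound). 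Applying the $L^2$-existence theorem for $\bar{\partial}$ with Nakano-positive curvature on the pseudoconvex domain $D$ (see \cite{demailly-smallbook}) then yields $u$ with $\bar{\partial}u=v$ and
\[ \|u\|^2_{\varphi+\psi}\leqslant\int_D\left(\left[i\partial\bar{\partial}\psi\otimes\tmop{Id}_{\underline{\mathbb{C}}^{k}}+i\beta^{\ast}\wedge\beta,\Lambda\right]^{-1}v\wedge dz,\,v\wedge dz\right)e^{-\varphi-\psi}\,dV, \]
which is the second term of $I_{\varphi,\psi,g}(f)$ and is finite by hypothesis (3). Combining this with the first step gives $\|h\|^2_{\varphi+\psi}\leqslant I_{\varphi,\psi,g}(f)$, as required.

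I expect the main obstacle to be the \emph{sharp} $L^2$ solvability of $\bar{\partial}$, with the inverse-of-the-curvature-operator estimate, on a noncompact domain and with $\varphi$ only plurisubharmonic rather than smooth. I would handle this in the standard way: exhaust $D$ by smoothly bounded relatively compact pseudoconvex subdomains, replace $\varphi$ by a decreasing sequence of smooth plurisubharmonic approximations, establish the estimate uniformly on each piece through the twisted Bochner-Kodaira-Nakano identity and the associated a priori inequality, and then pass to a weak limit by a diagonal/Montel argument, checking that holomorphy and the identity $g\cdot h=f$ persist in the limit. Two further points, routine but essential, must be verified: that condition (1) makes $S$ an honest subbundle of rank $k-1$, so the curvature computation has no singularities, and that the positivity in (2) is exactly what renders the curvature operator invertible, so that the right-hand side of the estimate is well defined. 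I anticipate that the bookkeeping in this limiting procedure, rather than any single algebraic identity, will be the delicate part of the argument.
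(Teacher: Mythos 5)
Your proposal is correct and follows essentially the same route as the paper's proof: you take the minimal smooth lifting $h_0=g^{\ast}f$, reduce to the $S$-valued equation $\bar{\partial}u=\beta^{\ast}f$ via the Gauss--Codazzi identity $\bar{\partial}g^{\ast}=-\beta^{\ast}$, identify the curvature of $S$ under the weight $e^{-\psi}$ as $i\partial\bar{\partial}\psi\otimes\tmop{Id}+i\beta^{\ast}\wedge\beta$, solve by Demailly's $L^2$ existence theorem, and conclude by the pointwise orthogonality $g^{\ast}f\perp S$. Your additional remarks (discarding the nonnegative $i\partial\bar{\partial}\varphi$ contribution since a larger curvature operator has a smaller inverse, and regularizing a nonsmooth $\varphi$ by exhaustion and decreasing smooth plurisubharmonic approximations) are exactly the standard points the paper leaves implicit.
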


Recently, motivated by the geometric meaning of the optimal $L^2$ extension theorem found in \cite{guan-zhou2014optimal} (see also \cite{zhou2015}, \cite{Guan-Zhou2017}), new methods
were found in
{\cite{deng-wang-zhang-zhou2018new-char,deng-ning-wang-zhou2023}} to characterize plurisubharmonic functions and
Nakano positivity in terms of $L^2$ extensions and $L^2$ estimates, which can
be seen as a converse to the $L^2$ theory in several complex variables. Related work was done in \cite{LZ}. It is thus natural to ask whether a converse to the
Skoda(-type) division theorem exists or not.

As usual, we denote by $A^2(D,\varphi)$ the set of holomorphic functions on $D$ which are square
integrable with respect to $e^{-\varphi}$. We will always assume that $A^2(D,\varphi)\neq\{0\}$ in the rest of the paper. The main result as follows in the present paper provides a converse to Theorem \ref{thm:sharper-division}.
\begin{theorem}
  \label{thm:2-division}
  Let $D\subset\mathbb{C}^n$ be a bounded domain and $\varphi\in C^2(D)$. If there exists some non-zero $f\in A^2(D,\varphi)$ which is $L^2$ divisible with respect to $\varphi$, then $\varphi$ is plurisubharmonic on $D$.
   \end{theorem}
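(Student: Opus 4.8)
The plan is to argue by contradiction, exploiting the freedom to choose the test pair $(g,\psi)$ in the definition of $L^2$ divisibility. Since $\varphi\in C^2(D)$, the Hermitian form $i\partial\bar\partial\varphi$ is continuous, so $\varphi$ is plurisubharmonic as soon as $\sum_{j,k}\varphi_{j\bar k}(z)a_j\bar a_k\geqslant 0$ holds for all $a\in\mathbb{C}^n$ on a dense subset of $D$. As $f\not\equiv 0$, its zero set is nowhere dense, so it suffices to establish this inequality at every point $z_0$ with $f(z_0)\neq 0$. Assume, for contradiction, that $\sum_{j,k}\varphi_{j\bar k}(z_0)a_j\bar a_k<0$ for some such $z_0$ and some $a$. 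The hypotheses are invariant under translation and unitary change of coordinates, so we normalize $z_0=0$, $a=(1,0,\ldots,0)$, i.e. $\varphi_{1\bar 1}(0)<0$.

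Next I would recast $L^2$ divisibility as an $L^2$ estimate for a $\bar\partial$-equation. For any admissible $(g,\psi)$, write a solution of $g\cdot h=f$ as $h=\bar g f/|g|^2+u$, where the first summand is the pointwise minimal solution (orthogonal to $S$, with $|\bar g f/|g|^2|^2=|f|^2/|g|^2$) and $u$ is a section of $S=\operatorname{Ker}g$. Holomorphicity of $h$ forces $u$ to solve a $\bar\partial$-equation $\bar\partial u=w$ inside $S$, whose source $w$ is built from $\beta$ and $f$ exactly as in Skoda's proof (this is where the Gauss--Codazzi formula enters, identifying the curvature of $S$ with $-\beta^{\ast}\wedge\beta$). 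Using the pointwise orthogonal splitting, the divisibility estimate $\|h\|^2_{\varphi+\psi}\leqslant I_{\varphi,\psi,g}(f)$ reduces to
\[
\|u_{\min}\|^2_{\varphi+\psi}\leqslant \int_D\left(\left[i\partial\bar\partial\psi\otimes\operatorname{Id}_{\underline{\mathbb{C}}^{k}}+i\beta^{\ast}\wedge\beta,\Lambda\right]^{-1}\beta^{\ast}\wedge dz,\beta^{\ast}\wedge dz\right)|f|^2e^{-\varphi-\psi}\,dV,
\]
where $u_{\min}$ is the minimal $L^2(e^{-\varphi-\psi})$ solution in $S$. Crucially, the right-hand side is Hörmander's bound built from the curvature of $\psi$ and $\beta$ only, with the contribution $i\partial\bar\partial\varphi$ omitted --- an omission that is legitimate in Skoda's theorem precisely because $i\partial\bar\partial\varphi\geqslant 0$ there.

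To produce the contradiction I would choose $(g,\psi)$ isolating the $z_1$-direction and concentrated near $0$: for instance $g=(1,z_1)$ (after a cutoff so that the components have no common zeros and $I_{\varphi,\psi,g}(f)<\infty$, which is automatic since $D$ is bounded and $f\in A^2(D,\varphi)$), together with a small strictly plurisubharmonic $\psi$ ensuring $i\partial\bar\partial\psi\otimes\operatorname{Id}_{\underline{\mathbb{C}}^{k}}+i\beta^{\ast}\wedge\beta>0$ in the sense of Nakano (condition (2)). For $u_{\min}$ I would use the standard duality characterization $\|u_{\min}\|^2_{\varphi+\psi}=\sup_\alpha |(w,\alpha)|^2/\|\bar\partial^{\ast}\alpha\|^2_{\varphi+\psi}$ over $S$-valued test $(n,1)$-forms $\alpha$, feeding in a form $\alpha_0$ adapted to the curvature operator. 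By the Bochner--Kodaira--Nakano identity combined with Gauss--Codazzi, the denominator $\|\bar\partial^{\ast}\alpha_0\|^2_{\varphi+\psi}$ is controlled by the \emph{full} curvature $[i\partial\bar\partial(\varphi+\psi)\otimes\operatorname{Id}_{\underline{\mathbb{C}}^{k}}+i\beta^{\ast}\wedge\beta,\Lambda]$, so the resulting lower bound for $\|u_{\min}\|^2_{\varphi+\psi}$ is the same integral as above but with $i\partial\bar\partial(\varphi+\psi)$ in place of $i\partial\bar\partial\psi$ inside the bracket.

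Finally, since $A\mapsto[A,\Lambda]^{-1}$ reverses the Nakano order, the lower bound coming from the full curvature and the upper bound coming from the $\psi$-only curvature are compatible only when $i\partial\bar\partial\varphi\geqslant 0$. Localizing all integrals to the point $0$ by a scaling/concentration argument, the leading asymptotics are governed by the pointwise curvature at $0$, and the assumption $\varphi_{1\bar 1}(0)<0$ makes the lower bound strictly exceed the upper bound for the chosen parameters, contradicting $L^2$ divisibility. I expect the main obstacle to be this last matching step: selecting $\alpha_0$ and the concentration scale so that the duality lower bound is sharp to leading order \emph{and} the Gauss--Codazzi/Bochner--Kodaira--Nakano computation genuinely surfaces $\sum_{j,k}\varphi_{j\bar k}(0)a_j\bar a_k$ as the decisive term; checking that conditions (1)--(3) persist along the family is comparatively routine.
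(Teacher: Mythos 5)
Your outline reproduces the paper's strategy almost step for step: argue by contradiction at a point where $f\neq 0$ and the complex Hessian of $\varphi$ is negative in some direction; convert $L^2$ divisibility into the estimate for a solution of $\bar{\partial}u=\beta^{\ast}f$ via the splitting $h=g^{\ast}f+u$ (this is exactly Proposition \ref{prop:dbar-division-equivalent}); test the estimate against compactly supported $S$-valued $(n,1)$-forms by Cauchy--Schwarz/duality; bound $\|\bar{\partial}_S^{\ast}\alpha\|^2$ through Bochner--Kodaira--Nakano and Gauss--Codazzi; and concentrate the weight (the paper takes $\psi=(k+k_{\varepsilon})(|z|^2-\delta^2/4)$, $k\to\infty$). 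The genuine gap sits precisely at the step you flag as the ``main obstacle'', and it is not routine: your test tuple $g=(1,z_1)$ cannot close the argument. Two error terms unavoidably enter the final comparison. First, condition (2) forces $\psi$ to be strictly plurisubharmonic enough to dominate $-i\beta^{\ast}\wedge\beta$; the needed amount $k_0$ (as in \eqref{equ:def-k0}) is comparable to $\sup|\beta|^2$ and contributes an error of size $k_0\int|\beta^{\ast}|^2|f|^2e^{-\varphi-\psi}dV$. Second, the natural test form $\alpha_0=\chi k^{-1}\beta^{\ast}f\wedge dz$ is not holomorphic, so BKN produces the additional term $\|\partial^{\ast}\alpha_0\|^2$ (using ${D'_S}^{\ast}=\partial^{\ast}$), which must be explicitly estimated. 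For $g=(1,z_1)$ one has $\beta^{\ast}|_0=(0,d\bar z_1)$, so $|\beta^{\ast}|^2$, $k_0$, and $|\partial^{\ast}(\beta^{\ast}\wedge dz)|^2$ are all of order $1$ --- the same order as the targeted negative term $\varphi_{1\bar 1}(0)|\beta^{\ast}|^2|f(0)|^2$. With no tunable parameter, the final inequality is a comparison of fixed constants and need not be violated, so no contradiction results.

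The paper's resolution is the choice $g=(z_1,\ldots,z_r,z_n+M)$ with $M>\sup_{z\in D}|z|$ a large parameter (also guaranteeing no common zeros without any cutoff --- incidentally, your ``cutoff'' for $g=(1,z_1)$ is unnecessary, since $g_1=1$ never vanishes). Then Lemma \ref{lem:2nd-form-compute} gives $\beta^{\ast}|_0=(M^{-2}d\bar z_1,\ldots,M^{-2}d\bar z_r,0)$, so the curvature term and $|\beta^{\ast}|^2$ are both of size $M^{-4}$, while $|\partial^{\ast}(\beta^{\ast}\wedge dz)|^2|_0=2r/M^6$ is smaller by a factor $M^{-2}$, and $k_0\to 0$ as $M\to\infty$. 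After normalizing by $M^4$, the error terms become $2(k_0+\varepsilon)r+3r/M^2$, which can be made less than $\frac{1}{6}|\sum_j\lambda_j|$, and the $k\to\infty$ concentration kills the remaining defect, yielding the contradiction $-2\sum_j\lambda_j<-\frac{1}{3}\sum_j\lambda_j$. So your scheme is correct in outline and matches the paper's, but without a parameter making the second fundamental form asymptotically negligible, and without the explicit control of $\|\partial^{\ast}\alpha_0\|^2$ (which costs the paper the long computation of Lemma \ref{lem:2nd-form-compute}), the decisive matching step does not go through. Your preliminary reduction to points where $f\neq 0$ by density is fine and corresponds to the paper's normalization $f(0)\neq 0$.
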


In particular, if $\varphi$ is continuous up to the boundary of $D$, we can get the plurisubharmonicity of $\varphi$ by investigating the $L^2$ divisibility of the constant function $1$.

Indeed, if $g=(g_1,\ldots,g_{r+1})$ in the Theorem \ref{thm:2-division} satisfies $d g_1\wedge\cdots\wedge d g_{r} \neq 0$, we have a more precise result as follows.

\begin{theorem}
  \label{thm:main-inverse}Let $D \subset \mathbb{C}^n$ be a bounded domain,
  $\varphi \in C^2 (D)$ and $r$ an integer between $1$ and $n$. If there exists a non-zero holomorphic function $f\in A^2(D,\varphi)$ such that for any
   $g = (g_1, \ldots, g_{r + 1}) \in (\mathcal{O} (D))^{\oplus (r +
  1)}$ with $g_1, g_2, \ldots, g_{r + 1}$ have no common zeros and
  $d g_1\wedge\cdots\wedge d g_r \neq 0$, and any smooth $\psi\in PSH(D) $ satisfying
  \[ i \partial \bar{\partial} \psi \otimes
     \tmop{Id}_{\underline{\mathbb{C}}^{r + 1}} + i \beta^{\ast} \wedge \beta
     > 0 \]
  in the sense of Nakano with $I_{\varphi,\psi,g}(f)<\infty$,
  there exists $h = (h_1, \ldots, h_{r + 1}) \in (\mathcal{O} (D))^{\oplus (r
  + 1)}$ such that $f = g \cdot h$ and
  \[
  \|h\|^2_{\varphi+\psi}\leqslant I_{\varphi,\psi,g}(f),
  \]
  then the sum of any $r$ eigenvalues of $i
  \partial \bar{\partial} \varphi$ is non-negative.
\end{theorem}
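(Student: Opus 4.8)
The plan is to argue by contradiction while localizing the curvature condition to a single point, using the freedom to choose $g$ and $\psi$ so that the operator appearing in $I_{\varphi,\psi,g}(f)$ probes exactly a prescribed sum of $r$ eigenvalues of $i\partial\bar\partial\varphi$. Fix $z_0\in D$ with $f(z_0)\neq0$; since $f\not\equiv0$ such points form a dense open set, and because $i\partial\bar\partial\varphi$ is continuous it suffices to establish the eigenvalue inequality there. After a unitary change of coordinates centered at $z_0$ we may assume $i\partial\bar\partial\varphi(z_0)=\mathrm{diag}(\lambda_1,\dots,\lambda_n)$, and, relabeling, that the $r$ eigenvalues to be tested are the first $r$. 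Suppose for contradiction that $\lambda_1+\cdots+\lambda_r<0$. The goal is to produce admissible data $(g,\psi)$ for which no holomorphic solution $h$ of $g\cdot h=f$ can satisfy $\|h\|^2_{\varphi+\psi}\le I_{\varphi,\psi,g}(f)$.

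\emph{Construction of the test data.} Take $g=(z_1-z_0^1,\dots,z_r-z_0^r,M)$ with $M$ a large positive constant. The constant last entry guarantees that $g_1,\dots,g_{r+1}$ have no common zeros, while $dg_1\wedge\cdots\wedge dg_r=dz_1\wedge\cdots\wedge dz_r\neq0$, so $g$ is admissible in the sense of Theorem \ref{thm:main-inverse}. A direct computation from the definition of $\beta$ gives $\beta|_{z_0}=(-dz_1,\dots,-dz_r,0)$, so $S=\operatorname{Ker}g$ restricts at $z_0$ to $\langle e_1,\dots,e_r\rangle$ and $i\beta^{\ast}\wedge\beta$ is, to leading order in $1/M$, diagonal in the directions $dz_1,\dots,dz_r$. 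Choosing $\psi=\varepsilon|z|^2$ with $\varepsilon>0$ makes $\psi$ strictly plurisubharmonic and, for $\varepsilon,M$ in a suitable range, enforces the Nakano positivity $i\partial\bar\partial\psi\otimes\mathrm{Id}+i\beta^{\ast}\wedge\beta>0$; since $D$ is bounded, $f\in A^2(D,\varphi)$ and $|g|^2\ge M^2$, one checks $I_{\varphi,\psi,g}(f)<\infty$. Thus $(g,\psi)$ is admissible.

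\emph{Reduction to an optimal $\bar\partial$-estimate.} The pointwise minimal solution of $g\cdot h_0=f$ is $h_0=f\bar g/|g|^2\in S^{\perp}$, which is smooth with $|h_0|^2=|f|^2/|g|^2$, and any holomorphic solution $h$ differs from it by $u:=h_0-h\in S$, which solves $\bar\partial u=\bar\partial h_0=:v$. Because $h_0(z)\in S^{\perp}_z$ and $u(z)\in S_z$ are pointwise orthogonal, $\|h\|^2_{\varphi+\psi}=\int_D\frac{|f|^2}{|g|^2}e^{-\varphi-\psi}\,dV+\|u\|^2_{\varphi+\psi}$, so comparing with the two terms of $I_{\varphi,\psi,g}(f)$ shows that the divisibility hypothesis is equivalent to the optimal bound
\[
\|u\|^2_{\varphi+\psi}\le\int_D\bigl([\,i\partial\bar\partial\psi\otimes\mathrm{Id}+i\beta^{\ast}\wedge\beta,\Lambda\,]^{-1}\beta^{\ast}\wedge dz,\beta^{\ast}\wedge dz\bigr)\,|f|^2e^{-\varphi-\psi}\,dV=:J .
\]
Here $v=\bar\partial h_0$ is proportional to $\beta^{\ast}$ (with factor $f/|g|^2$), so $J$ is exactly the Skoda--Hörmander constant for this $\bar\partial$-equation built from the curvature $\Theta_S:=i\partial\bar\partial\psi\otimes\mathrm{Id}+i\beta^{\ast}\wedge\beta$ \emph{alone}: indeed Theorem \ref{thm:sharper-division} asserts precisely $\|u\|^2\le J$ when $\varphi$ is plurisubharmonic, the role of $\varphi$ being only to contribute extra positivity. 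Crucially, $\varphi$ enters the curvature of $S$ relevant to the minimal solution only through the term $i\partial\bar\partial\varphi\otimes\mathrm{Id}_S$ added to the ambient weight curvature.

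\emph{Extracting the eigenvalue condition, and the obstacle.} By the Gauss--Codazzi formula the Chern curvature of $S$ with the weight $e^{-\varphi-\psi}$ is $i\partial\bar\partial\varphi\otimes\mathrm{Id}_S+\Theta_S$. Using the duality $\|u\|^2=\sup_\alpha|\langle v,\alpha\rangle|^2/\|\bar\partial^{\ast}\alpha\|^2$ over $\bar\partial$-closed $S$-valued $(n,1)$-forms $\alpha$, together with the Bochner--Kodaira--Nakano identity $\|\bar\partial^{\ast}\alpha\|^2=\|\bar\nabla\alpha\|^2+\langle[\,i\partial\bar\partial\varphi\otimes\mathrm{Id}_S+\Theta_S,\Lambda\,]\alpha,\alpha\rangle$, I would test with forms $\alpha$ concentrating at $z_0$ and modeled on the form dual to $v$. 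Since $v|_{z_0}$ is proportional to $\sum_{j=1}^r d\bar z_j\otimes e_j$ and $S|_{z_0}=\langle e_1,\dots,e_r\rangle$, the extra curvature $i\partial\bar\partial\varphi\otimes\mathrm{Id}_S$ contributes to $\langle[\,\cdot\,,\Lambda]\alpha,\alpha\rangle$ exactly the diagonal sum $\sum_{j=1}^r\varphi_{j\bar j}(z_0)=\lambda_1+\cdots+\lambda_r$. Hence, if this sum is negative, the full-curvature optimal constant strictly exceeds $J$ at $z_0$, forcing $\|u\|^2_{\varphi+\psi}>J$ in the limit $M\to\infty$, $\varepsilon\to0$, contradicting the displayed bound. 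Letting $z_0$ range over the dense set $\{f\neq0\}$ and invoking continuity of $i\partial\bar\partial\varphi$ then gives the conclusion. I expect the main difficulty to lie precisely in this last step: rigorously passing from the global optimal bound to a pointwise comparison of Hörmander constants, namely constructing the concentrating test forms $\alpha$, controlling the errors from $\psi\neq0$, from $M<\infty$, and from the cutoff, and handling the Gauss--Codazzi terms with the correct signs so that the degeneration of the estimate detects exactly the sum of $r$ eigenvalues rather than the individual eigenvalues or the full Laplacian.
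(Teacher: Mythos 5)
Your overall architecture coincides with the paper's: argue by contradiction at a point where $f\neq 0$ and $\lambda_1+\cdots+\lambda_r<0$, take $g$ linear in the $r$ eigendirections plus a large constant so that $\beta^{\ast}|_{z_0}$ is proportional to $(d\bar z_1,\dots,d\bar z_r,0)$, reduce the division hypothesis to the optimal bound $\|u\|^2_{\varphi+\psi}\leqslant J$ for $\bar{\partial} u=\beta^{\ast} f$ (this is exactly Proposition \ref{prop:dbar-division-equivalent}, and your orthogonality argument for it is correct), and then attack the bound by duality plus Bochner--Kodaira--Nakano. But your proof is incomplete exactly where you flag "the main difficulty," and the specific tool you propose cannot close it. With $\psi=\varepsilon|z|^2$ nothing in the inequality localizes: the duality pairing against a cutoff test form produces a local quantity on the left, but the Skoda--H\"ormander constant $J$ and the curvature terms on the right remain integrals over all of $D$, so the negativity of $\sum_{j=1}^r\lambda_j$ at one point is swamped by global contributions. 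Moreover, $\varepsilon\to 0$ is not even admissible at fixed $M$: Nakano positivity of $i\partial\bar{\partial}\psi\otimes\tmop{Id}_{\underline{\mathbb{C}}^{r+1}}+i\beta^{\ast}\wedge\beta$ forces $\varepsilon\geqslant k_0>0$ with $k_0$ as in \eqref{equ:def-k0}, and $k_0\to 0$ only as $M\to\infty$, so your two limits must be coupled, which your sketch does not arrange.

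The paper's decisive device, absent from your proposal, is the one-parameter family $\psi=(k+k_{\varepsilon})(|z|^2-\delta^2/4)$ with $k\to\infty$, together with the explicit test form $\alpha=\chi\frac{1}{k}\beta^{\ast}f\wedge dz$ rather than an unspecified "concentrating form dual to $v$." The large Hessian does three jobs at once: it gives $J\leqslant\frac{1}{k}\int_D|\beta^{\ast}|^2|f|^2e^{-\varphi-\psi}dV$, a gain of $1/k$ on the right of the duality inequality \eqref{equ:key1}; the factor $e^{-\psi+\log k}\to 0$ off $B(0,\delta/2)$ kills the cutoff errors and all contributions away from the origin, so that only the pointwise data of Lemma \ref{lem:2nd-form-compute} --- namely $\beta^{\ast}|_0=(M^{-2}d\bar z_1,\dots,M^{-2}d\bar z_r,0)$ and $|\partial^{\ast}(\beta^{\ast}\wedge dz)|^2|_0=2r/M^6$ --- survive; and the shift $-\delta^2/4$ makes $\int_{B(0,\delta/2)}e^{-\varphi-\psi}dV\to\infty$, which annihilates the residual additive error $\varepsilon M^4/\bigl(|f(0)|^2\int_{B(0,\delta/2)}e^{-\varphi-\psi}dV\bigr)$. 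Only then does the bookkeeping of scales close: the eigenvalue term enters at order $M^{-4}$, the $\partial^{\ast}$ term at order $M^{-6}$, and $k_0 r=o(1)$ as $M\to\infty$, producing the contradiction $-2\sum_{j=1}^r\lambda_j<-\frac13\sum_{j=1}^r\lambda_j$. Without this concentrating family (or an equivalent localization mechanism), your concluding step --- "the full-curvature optimal constant strictly exceeds $J$ at $z_0$, forcing $\|u\|^2>J$" --- is a heuristic, not an argument: the hypothesis controls only one particular global solution, and converting a pointwise curvature defect into a quantitative violation of the global estimate is precisely the content the proof must supply.
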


It is clear that Theorem \ref{thm:2-division} is an immediate consequence of Theorem \ref{thm:main-inverse}.

The present paper is structured as follows. In Section
\ref{sec:prelimaries}, we provide a review of fundamental concepts related to
hermitian holomorphic vector bundles, including Nakano
positivity, the Bochner-Kodaira-Nakano identity, and the Gauss-Codazzi formula. In
Section \ref{sec:sharper-division}, we recall the proof of the stronger version of Skoda's
$L^2$ division theorem (Theorem \ref{thm:sharper-division}). The
equivalence of the solution to $L^2$ division problem and a certain type of
$\bar{\partial}$-equations will be given.
The proof of Theorem
\ref{thm:main-inverse} is presented in Section \ref{sec:inverse-division}.

\section{Preliminaries}\label{sec:prelimaries}

\subsection{Foundations for hermitian holomorphic vector bundles}

We recall some notions related to the positivity of hermitian holomorphic
vector bundles as well as the well-known Bochner-Kodaira-Nakano identity.

Let $(X, \omega)$ be a K{\"a}hler manifold of dimension $n$ and $(E, h)
\rightarrow X$ a hermitian holomorphic vector bundle of rank $r$. For any $u,
v \in E_x \otimes \wedge^{p, q} T^{\ast}_x X$, we denote the inner product of
$u, v$ by
$ \langle u, v \rangle  $.
Similarly, we denote $| u |^2 = \langle u, u \rangle$,
\[ \| u \|^2 = \int_D | u |^2 d V_{\omega} \quad \text{and}\quad  \llangle u, v \rrangle =
   \int_D \langle u, v \rangle d V_{\omega} . \]

Denote the Chern connection of $(E, h)$ by $D_E=D'_E+\bar{\partial}_E$.
The curvature of $(E, h)$
is denoted by $\Theta_{E, h}$, and when there is no need to specify the metric
$h$, we simply write $\Theta (E)$. It can be expressed in terms of an
orthonormal frame $(e_1, \ldots, e_r)$ of $E$ over a coordinate neighborhood
$(U, z_1, \ldots, z_n)$ as follows:
\[ i \Theta_{E, h} = i \sum_{1 \leqslant j, k \leqslant n, 1 \leqslant
   \lambda, \mu \leqslant r} c_{j k \lambda \mu} d z_j \wedge d \bar{z}_k
   \otimes e^{\ast}_{\lambda} \otimes e_{\mu}, \]
with $\bar{c}_{j k \lambda \mu} = c_{k j \mu \lambda}$. It can associate to $i
\Theta_{E, h}$ a hermitian form defined on $T X \otimes E$ as follows
\[ \theta_{E, h} (u, v) = \sum_{j, k, \lambda, \mu} c_{j k \lambda \mu} u_{j
   \lambda} \bar{v}_{k \mu}, \]
for $u, v \in T_x X \otimes E_x$. $(E, h)$ is called to be (semi-)positive in the sense of Nakano if for any
    non-zero $u$, one has $\theta_{E, h} (u, u) >0$ ($\theta_{E, h} (u, u) \geqslant 0$). For $B\in T^\ast\otimes \operatorname{Hom}(E,E)$, we also write $B>0$ in the rest of the paper if the corresponding quadratic form is positive definite.

The formal adjoint operators for $D'_E$ and $\bar{\partial}_E$ are denoted by ${D'_E}^{\ast}$ and $\bar{\partial}_E^{\ast}$ respectively.
The K{\"a}hler identities provide explicit relations between the operators
$D_E'$, $\bar{\partial}_E $ and their formal adjoint ${D'_E}^{\ast}$ and
$\bar{\partial}_E^{\ast}$.

\begin{proposition}[see {\cite{demailly-bigbook}}]
  \label{prop:kahler-identities}Let $(X, \omega)$ be a K{\"a}hler manifold and
  $(E, h) \rightarrow X$ a hermitian holomorphic vector bundle. Let $L=\omega\wedge\cdot$
  and $\Lambda$ the formal adjoint of $L$. Then one has
  \begin{enumeratenumeric}
    \item $[\bar{\partial}_E^{\ast}, L] = i D'_E$,

    \item $[{D'_E}^{\ast}, L] = - i \bar{\partial}_E$,

    \item $[\Lambda, \bar{\partial}_E] = - i {D'_E}^{\ast}$,

    \item $[\Lambda, D'_E] = i \bar{\partial}^{\ast}_E$.
  \end{enumeratenumeric}
\end{proposition}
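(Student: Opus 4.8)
The plan is to establish the first identity, $[\bar{\partial}_E^{\ast}, L] = i D'_E$, and then to deduce the remaining three purely formally. Taking the formal adjoint of $[\bar{\partial}_E^{\ast}, L] = i D'_E$ and using $L^{\ast} = \Lambda$ together with $(i A)^{\ast} = -i A^{\ast}$ yields $[\Lambda, \bar{\partial}_E] = -i {D'_E}^{\ast}$, which is part (3). Complex conjugation interchanges $D'_E \leftrightarrow \bar{\partial}_E$ and hence $\bar{\partial}_E^{\ast} \leftrightarrow {D'_E}^{\ast}$, sends $i \mapsto -i$, and fixes the real operators $L, \Lambda$; applied to part (1) it produces $[{D'_E}^{\ast}, L] = -i \bar{\partial}_E$, which is part (2). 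Finally the formal adjoint of part (2) gives $[\Lambda, D'_E] = i \bar{\partial}_E^{\ast}$, which is part (4). Thus the whole proposition reduces to the single identity in part (1).

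To prove part (1), I would note that $[\bar{\partial}_E^{\ast}, L]$ and $i D'_E$ are both first-order differential operators, so it suffices to compare their coefficients at an arbitrarily fixed point $x_0 \in X$, and these coefficients depend only on the $1$-jet of the metric data at $x_0$. Here the Kähler hypothesis enters decisively: since $d\omega = 0$, one may choose holomorphic normal coordinates $(z_1, \ldots, z_n)$ centered at $x_0$ with $\omega = i \sum_j d z_j \wedge d \bar{z}_j + O(|z|^2)$, so that all first derivatives of $\omega$ vanish at $x_0$; simultaneously one chooses a holomorphic frame $(e_1, \ldots, e_r)$ of $E$ that is normal at $x_0$, so that $h = \mathrm{Id} + O(|z|)$ and the Chern connection form of $(E, h)$ vanishes at $x_0$. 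In these adapted coordinates and frame, at the point $x_0$ the operators $L = \omega \wedge \cdot$ and $\Lambda$ agree, together with their relevant $1$-jets, with their Euclidean counterparts on $\mathbb{C}^n$, while $D'_E, \bar{\partial}_E$ reduce to the flat $\partial, \bar{\partial}$ acting on $\underline{\mathbb{C}}^{r}$-valued forms. Hence verifying part (1) at $x_0$ is reduced to a constant-coefficient computation on flat $\mathbb{C}^n$ for the trivial bundle.

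For the flat model I would introduce the wedge operators $e_j = d z_j \wedge \cdot$, $\bar{e}_j = d \bar{z}_j \wedge \cdot$ and their pointwise adjoints $e_j^{\ast}, \bar{e}_j^{\ast}$ (the interior products), which obey the Clifford-type relations $\{e_j, e_k^{\ast}\} = \delta_{jk}$, $\{\bar{e}_j, \bar{e}_k^{\ast}\} = \delta_{jk}$ with all remaining anticommutators zero. With respect to the Euclidean metric one has $L = i \sum_j e_j \bar{e}_j$, $\Lambda = L^{\ast} = -i \sum_j \bar{e}_j^{\ast} e_j^{\ast}$, together with $\partial = \sum_j e_j\, \partial/\partial z_j$, $\bar{\partial} = \sum_j \bar{e}_j\, \partial/\partial \bar{z}_j$ and $\bar{\partial}^{\ast} = -\sum_j \bar{e}_j^{\ast}\, \partial/\partial z_j$. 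Since the coefficient derivatives commute with the constant form operators, $[\bar{\partial}^{\ast}, L] = -i \sum_{j,k} [\bar{e}_k^{\ast}, e_j \bar{e}_j]\, \partial/\partial z_k$, and a short application of the anticommutation relations gives $[\bar{e}_k^{\ast}, e_j \bar{e}_j] = -\delta_{kj} e_j$. Substituting this collapses the commutator to $i \sum_j e_j\, \partial/\partial z_j = i \partial$, which is precisely part (1) in the flat case; since the coefficients agree at the arbitrary point $x_0$, the identity holds globally.

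The main obstacle is not the flat algebra but the reduction step: I must ensure that the zeroth-order part of $[\bar{\partial}_E^{\ast}, L]$ — which a priori involves the first derivatives of $\omega$, the first derivatives of $h$, and the Chern connection form — genuinely vanishes at $x_0$, so that the flat computation captures the full $1$-jet contribution. This is exactly where the Kähler condition is indispensable: the vanishing of $d\omega$ is what permits a choice of coordinates trivializing the $1$-jet of $\omega$, and for a general Hermitian metric these commutators pick up torsion correction terms and the identities fail as stated. The bundle contributions are controlled by the normal frame, in which the connection form vanishes at $x_0$ and $L \otimes \mathrm{Id}_E$ acts trivially on the fibre directions, so the bundle version reduces to the scalar one at the center. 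Checking carefully that no such error terms survive is the delicate point of the argument.
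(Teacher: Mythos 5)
Your proof is correct and follows essentially the same route as the paper's source for this statement: the paper states the proposition without proof, citing Demailly's book, and the proof there is precisely your argument --- reduce all four identities to the first by formal adjoints and conjugation, observe that $[\bar{\partial}_E^{\ast}, L] - iD'_E$ is zeroth order with coefficients depending only on the $1$-jets of $\omega$ and $h$, kill those $1$-jets at an arbitrary point by Kähler normal coordinates and a normal frame, and verify the flat model by the anticommutation relations (your computation $[\bar{e}_k^{\ast}, e_j \bar{e}_j] = -\delta_{kj} e_j$ is right). The only step worth tightening is deducing (2) from (1) by conjugation: for $E$-valued forms, conjugation lands in the conjugate bundle $\bar{E}$, so one should either apply (1) to $(\bar{E}, \bar{h})$ and conjugate back, or simply note that the same flat computation yields $[e_k^{\ast}, e_j \bar{e}_j] = \delta_{kj} \bar{e}_j$ and hence $[{D'}^{\ast}, L] = -i\bar{\partial}$ directly.
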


The Bochner-Kodaira-Nakano equality plays
a crucial role in the $L^2$ estimates for the $\bar{\partial}$-operator. Let
\[ \mathLaplace'_E = D'_E {D'}^{\ast}_E {+ D'}^{\ast}_E D'_E, \quad \Delta''_E =
   \bar{\partial}_E \bar{\partial}^{\ast}_E + \bar{\partial}_E^{\ast}
   \bar{\partial}_E, \]
be the $D'_E$-Laplace and $\bar{\partial}_E$-Laplace operators respectively.

\begin{proposition}[Bochner-Kodaira-Nakano, see {\cite{demailly-bigbook}}]
  \label{prop:bochner-kodaira}Let $(X, w)$ be a K{\"a}hler manifold and $(E,
  h) \rightarrow X$ a hermitian holomorphic vector bundle. Then one has
  \[ \Delta''_E = \Delta_E' + [i \Theta (E), \Lambda] . \]
\end{proposition}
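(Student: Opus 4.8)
The plan is to prove the contrapositive at the level of the complex Hessian, pointwise. Since $\varphi\in C^2(D)$ the condition ``the sum of any $r$ eigenvalues of $i\partial\bar\partial\varphi$ is non-negative'' is closed, and since a non-zero holomorphic $f$ vanishes only on a nowhere dense set, it suffices to establish the eigenvalue inequality at every point $p\in D$ with $f(p)\neq0$ and then pass to the closure. Fix such a $p$; after a unitary change of the coordinates centred at $p$ I would diagonalise $i\partial\bar\partial\varphi(p)=\sum_{j=1}^n\lambda_j\, i\,dz_j\wedge d\bar z_j$, so that after relabelling the prescribed $r$ eigenvalues become $\lambda_1,\dots,\lambda_r$, and I would argue by contradiction assuming $\lambda_1+\cdots+\lambda_r<0$.

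The next step is to manufacture admissible data $(g,\psi)$ adapted to $p$ and to these $r$ directions. I would take $g=(g_1,\dots,g_{r+1})$ with $g_j=z_j-p_j$ for $1\le j\le r$ and $g_{r+1}\equiv1$, so that the components have no common zeros in $D$, $dg_1\wedge\cdots\wedge dg_r=dz_1\wedge\cdots\wedge dz_r\neq0$, and the kernel bundle $S=\operatorname{Ker}g$ has rank $r$ with fibre $S_p=\operatorname{span}(e_1,\dots,e_r)$ at $p$. Using the Gauss--Codazzi formula of Section \ref{sec:prelimaries} I would compute the second fundamental form $\beta$ of this linear $g$ near $p$; the key qualitative feature is that $\beta$ couples the bundle direction $e_k$ to the cotangent direction $d\bar z_k$ for $k=1,\dots,r$. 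For $\psi$ I would take a small multiple $t\,|z-p|^2$ of the standard potential, with $t>0$ chosen just large enough that $i\partial\bar\partial\psi\otimes\operatorname{Id}_{\underline{\mathbb{C}}^{r+1}}+i\beta^{\ast}\wedge\beta>0$ holds in the sense of Nakano, so that $(g,\psi)$ is admissible; since $g_{r+1}\equiv1$ and $D$ is bounded one checks $I_{\varphi,\psi,g}(f)<\infty$.

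The heart of the argument is to read off the curvature of $\varphi$ from the hypothesis. Via the equivalence of Section \ref{sec:sharper-division} between the $L^2$ division estimate and a $\bar\partial$-equation on $S$, the bound $\|h\|_{\varphi+\psi}^2\le I_{\varphi,\psi,g}(f)$ furnishes an $S$-valued solution $u$ of $\bar\partial u=w$, where $w\propto f\,\beta^{\ast}\wedge dz/|g|^2$ is the source $\bar\partial(f\bar g/|g|^2)$, satisfying the optimal estimate $\|u\|^2\le\int_D([A_\psi,\Lambda]^{-1}w,w)\,dV$ with $A_\psi:=i\partial\bar\partial\psi\otimes\operatorname{Id}+i\beta^{\ast}\wedge\beta$. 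On the other hand, for every $\bar\partial$-closed $S$-valued $(n,1)$-form $\alpha$ in the domain of $\bar\partial^{\ast}$ one has $\|u\|^2\ge|\llangle w,\alpha\rrangle|^2/\|\bar\partial^{\ast}_S\alpha\|^2$, and the Bochner--Kodaira--Nakano identity (Proposition \ref{prop:bochner-kodaira}) applied to the honest metric $e^{-\varphi-\psi}$ gives $\|\bar\partial^{\ast}_S\alpha\|^2=\|{D'}^{\ast}_S\alpha\|^2+\int_D\langle[\,i\Theta,\Lambda]\alpha,\alpha\rangle\,dV$, whose curvature term contains the contribution $\int_D\langle[i\partial\bar\partial\varphi\otimes\operatorname{Id}_S,\Lambda]\alpha,\alpha\rangle\,dV$ coming from the weight $\varphi$, which is absent from the Skoda bound $A_\psi$. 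I would then choose $\alpha$ concentrated near $p$ and spread equally over the diagonal directions $d\bar z_k\otimes e_k$, $k=1,\dots,r$ selected by $S$ and $\beta$; for such $\alpha$ this contraction evaluates, to leading order at $p$, to a positive multiple of $(\lambda_1+\cdots+\lambda_r)\,\|\alpha\|^2$, which is negative under the contradiction hypothesis. Hence $\|\bar\partial^{\ast}_S\alpha\|^2$ can be forced small while $\llangle w,\alpha\rrangle$ stays bounded below (here $f(p)\neq0$ is used), driving the lower bound for $\|u\|^2$ above the finite, $\varphi$-independent bound $\int_D([A_\psi,\Lambda]^{-1}w,w)\,dV$, the desired contradiction.

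The main obstacle, and the step demanding the most care, is the construction of the family of test forms $\alpha$: I must produce $\bar\partial$-closed $S$-valued $(n,1)$-forms, concentrated near $p$ in exactly the $r$ diagonal directions, for which the first-order term $\|{D'}^{\ast}_S\alpha\|^2$ only just balances the negative curvature term, so that the sign of $\lambda_1+\cdots+\lambda_r$ genuinely governs how small $\|\bar\partial^{\ast}_S\alpha\|^2$ can be made, while the pairing $\llangle w,\alpha\rrangle$ remains non-degenerate. I expect this to be handled by a parabolic rescaling $z=p+\epsilon\zeta$ that blows up the Hessian of $\varphi$ at $p$ and degenerates all the data to a Gaussian (Bargmann--Fock) model on $\mathbb{C}^n$, where $\beta$, $w$ and the extremal $\alpha$ become explicit and all norms reduce to computable Gaussian integrals; the remaining work is to justify that the inequalities survive the limit $\epsilon\to0$, that the Nakano positivity is preserved along the family, and that the lower-order Taylor terms of $\varphi$ do not interfere.
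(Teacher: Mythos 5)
Your proposal does not prove the statement in question. The statement is the Bochner--Kodaira--Nakano identity $\Delta''_E = \Delta'_E + [i \Theta (E), \Lambda]$, a pointwise identity of operators on $E$-valued differential forms over a K\"ahler manifold, which the paper states without proof, citing \cite{demailly-bigbook}. What you have written is instead a strategy for Theorem \ref{thm:main-inverse} (equivalently Proposition \ref{prop:main}), the converse to Skoda's division theorem: you set up the contradiction hypothesis on the eigenvalues of $i \partial \bar{\partial} \varphi$, construct test data $(g, \psi)$, and pass through the equivalence of Proposition \ref{prop:dbar-division-equivalent}. Moreover, in the middle of your argument you explicitly \emph{apply} the Bochner--Kodaira--Nakano identity, so read as a proof of Proposition \ref{prop:bochner-kodaira} itself the argument is circular, and at no point does it establish the asserted operator identity. (As a side remark on the argument you did sketch: the choice $g_{r+1} \equiv 1$ would not suffice for the paper's contradiction, since then $\beta^{\ast}$ is of unit size near the base point and the Nakano-positivity threshold analogous to $k_0$ in \eqref{equ:def-k0} stays bounded away from $0$; the paper's choice $g_{r+1} = z_n + M$ with $M$ large is what makes the second fundamental form, and hence $k_0$, small.)

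What is actually required is a short formal computation from the K\"ahler identities of Proposition \ref{prop:kahler-identities}, with no global or $L^2$ input whatsoever. Writing $[\cdot, \cdot]$ for the graded commutator, item 4 of Proposition \ref{prop:kahler-identities} gives $\bar{\partial}_E^{\ast} = - i [\Lambda, D'_E]$, hence
\[ \Delta''_E = [\bar{\partial}_E, \bar{\partial}_E^{\ast}] = - i\, [\bar{\partial}_E, [\Lambda, D'_E]] . \]
The graded Jacobi identity (the middle operator $\Lambda$ is of even degree) yields
\[ [\bar{\partial}_E, [\Lambda, D'_E]] = [[\bar{\partial}_E, \Lambda], D'_E] + [\Lambda, [\bar{\partial}_E, D'_E]] . \]
By item 3 one has $[\bar{\partial}_E, \Lambda] = i {D'_E}^{\ast}$, while $[\bar{\partial}_E, D'_E] = \bar{\partial}_E D'_E + D'_E \bar{\partial}_E = \Theta (E)$, because $D_E^2 = \Theta (E)$ is of pure type $(1, 1)$ and ${D'_E}^2 = \bar{\partial}_E^2 = 0$ for the Chern connection. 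Substituting, $- i [[\bar{\partial}_E, \Lambda], D'_E] = - i \cdot i\, ({D'_E}^{\ast} D'_E + D'_E {D'_E}^{\ast}) = \Delta'_E$ and $- i [\Lambda, \Theta (E)] = [i \Theta (E), \Lambda]$, which together give $\Delta''_E = \Delta'_E + [i \Theta (E), \Lambda]$, as claimed. None of the choices of $g$, $\psi$, the cut-off forms, or the rescaling argument in your proposal plays any role in this statement.
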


Fix $x_0 \in X$ and choose a local coordinates $(z_1, \ldots, z_n)$ centered at
$x_0$ such that $\left( \frac{\partial}{\partial z_1}, \ldots,
\frac{\partial}{\partial z_n} \right)$ forms an orthonormal basis of $T_{x_0}
X$, the K{\"a}hler form $\omega$ \ can be expressed as
\[ \omega = i \sum d z_j \wedge d \bar{z}_k + O (\| z \|^2), \]
Moreover, at $x_0$, we also have
\[ i \Theta (E)  = i \sum c_{j k \lambda \mu} d z_j \wedge d \bar{z}_k
   \otimes e^{\ast}_{\lambda} \otimes e_{\mu}, \]
where $(e_1, \ldots, e_r)$ is an orthonormal basis of $E_{x_0}$. If $u = \sum
u_{j \lambda} d z \otimes d \bar{z}_j \otimes e_{\lambda} \in \wedge^{n, 1}
T^{\ast} X \otimes E$, where $d z = d z_1 \wedge \cdots \wedge d z_n$, then
\begin{equation}
  \langle [i \Theta (E), \Lambda] u, u \rangle = \sum_{j, k, \lambda, \mu}
  c_{j k \lambda \mu} u_{j \lambda} \bar{u}_{k \mu} .
  \label{equ:curvature-operator}
\end{equation}
\begin{lemma}[see also {\cite{demailly-bigbook}}]
  \label{lem:positive-definite}Let $(X, \omega)$ be a K{\"a}hler manifold and
  $(E, h) \rightarrow X$ a hermitian holomorphic vector bundle. Then $(E, h)$
  is (semi-)positive in the sense of Nakano if and only if $[i \Theta (E),
  \Lambda]$ is (semi-)positive definite on $\wedge^{n, 1} T^{\ast} X \otimes
  E$.
\end{lemma}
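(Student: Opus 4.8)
The plan is to reduce everything to the pointwise identity (\ref{equ:curvature-operator}), which already does the essential work. Both ``$(E,h)$ is Nakano (semi-)positive'' and ``$[i\Theta(E),\Lambda]$ is (semi-)positive definite on $\wedge^{n,1}T^{\ast}X\otimes E$'' are pointwise algebraic conditions on the curvature tensor, so it suffices to verify the equivalence fibrewise. Fixing $x_0\in X$ and the normal coordinates $(z_1,\ldots,z_n)$ introduced before (\ref{equ:curvature-operator}), I may work entirely with the curvature coefficients $c_{jk\lambda\mu}$ at $x_0$.

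First I would set up the linear isomorphism
\[ \Phi\colon T_{x_0}X\otimes E_{x_0}\longrightarrow \wedge^{n,1}T^{\ast}_{x_0}X\otimes E_{x_0},\qquad \frac{\partial}{\partial z_j}\otimes e_\lambda\longmapsto dz\otimes d\bar z_j\otimes e_\lambda, \]
where $dz=dz_1\wedge\cdots\wedge dz_n$. Since $\bigl(\tfrac{\partial}{\partial z_j}\otimes e_\lambda\bigr)_{j,\lambda}$ is a basis of $T_{x_0}X\otimes E_{x_0}$ and $\bigl(dz\otimes d\bar z_j\otimes e_\lambda\bigr)_{j,\lambda}$ is a basis of $\wedge^{n,1}T^{\ast}_{x_0}X\otimes E_{x_0}$, the map $\Phi$ is a bijection. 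For $v=\sum_{j,\lambda}v_{j\lambda}\frac{\partial}{\partial z_j}\otimes e_\lambda$ and $u=\Phi(v)=\sum_{j,\lambda}v_{j\lambda}\,dz\otimes d\bar z_j\otimes e_\lambda$, identity (\ref{equ:curvature-operator}) together with the definition of $\theta_{E,h}$ gives
\[ \langle [i\Theta(E),\Lambda]u,u\rangle=\sum_{j,k,\lambda,\mu}c_{jk\lambda\mu}v_{j\lambda}\bar v_{k\mu}=\theta_{E,h}(v,v). \]

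It then only remains to read off the equivalence: as $v$ runs over the nonzero elements of $T_{x_0}X\otimes E_{x_0}$, the image $u=\Phi(v)$ runs over the nonzero elements of $\wedge^{n,1}T^{\ast}_{x_0}X\otimes E_{x_0}$, and the displayed equality shows that $\theta_{E,h}(v,v)>0$ holds for all such $v$ precisely when $\langle[i\Theta(E),\Lambda]u,u\rangle>0$ holds for all such $u$; repeating the argument with ``$\geqslant$'' in place of ``$>$'' yields the semi-positive case. I do not expect a genuine obstacle here, since the analytic content is already packaged in (\ref{equ:curvature-operator}); the only points requiring care are the bijectivity of $\Phi$ (which rests on the fact that the coefficients $u_{j\lambda}$ of a general $(n,1)$-form valued in $E$ are unconstrained) and the observation that verifying a pointwise condition at an arbitrary $x_0$ in normal coordinates suffices to conclude on all of $X$.
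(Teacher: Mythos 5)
Your proof is correct and is essentially the argument the paper intends: the lemma is stated as an immediate consequence of the pointwise identity \eqref{equ:curvature-operator} (with a citation to Demailly in place of a written proof), and your reduction via the coordinate isomorphism $\Phi$ identifying $\langle [i\Theta(E),\Lambda]u,u\rangle$ with $\theta_{E,h}(v,v)$ is exactly how that identity yields the equivalence. No gaps; the two points you flag (bijectivity of $\Phi$ and the pointwise nature of both conditions) are indeed the only things to check.
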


\subsection{The Gauss-Codazzi formula}

In this subsection, we recall the Gauss-Codazzi formula. The readers may refer to
{\cite{demailly-bigbook,kobayashi-diff-geo-complex-vs}} for details. Let $(X,
\omega)$ be a K{\"a}hler manifold of dimension $n$ and consider the following
exact sequence of holomorphic vector bundles over $X$,
\[ 0 \rightarrow S \xrightarrow{j} E \xrightarrow{g} Q \rightarrow 0. \]
Assume that $E$ admits a hermitian metric, and equip $S$ and $Q$ with the
induced metrics. Denote by $D_E$, $D_S$, $D_Q$ the Chern connection
corresponding to $E, S, Q$ respectively. Write $D'_E$ and $\bar{\partial}_E$
the $(1, 0)$ and $(0, 1)$ connection associated to $D_E$ and similar for
$D'_S$, $\bar{\partial}_S$ and $D'_Q$, $\bar{\partial}_Q$.

Note that there always exits smooth homomorphisms
\[ j^{\ast} : E \rightarrow S, \quad g^{\ast} : Q \rightarrow E \]
such that
\[ j^{\ast} \circ j = \tmop{id}_S, \quad g \circ g^{\ast} = \tmop{id}_Q . \]
Thus they yield a $C^{\infty}$ isomorphism
\[ j^{\ast} \oplus g : E \simeq S \oplus Q. \]
\begin{proposition}
  \label{prop:gc-connection}According to the $C^{\infty}$ isomorphism
  $j^{\ast} \oplus g$, $D_E$ can be written as
  \[ D_E = \left[\begin{array}{cc}
       D_S & - \beta^{\ast} \\
       \beta & D_Q
     \end{array}\right], \]
  where $\beta$ is a smooth $(1, 0)$ form valued in $\tmop{Hom} (S, Q)$.
  $\beta$ is called the second fundamental form of $S$ in $E$, $\beta^{\ast}$
  is the adjoint of $\beta$. Moreover, one also has
  \[ \bar{\partial} g^{\ast} = -  \beta^{\ast} . \]
\end{proposition}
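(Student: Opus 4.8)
The plan is to use the orthogonal splitting induced by the metric together with the defining property of the Chern connection. First I would observe that the induced metric on $Q$ is chosen precisely so that $g$ restricts to an isometry from the orthogonal complement $S^{\perp}$ onto $Q$; consequently the $C^{\infty}$ isomorphism $j^{\ast}\oplus g$ identifies $(E,h)$ with the orthogonal direct sum $(S,h|_S)\oplus(Q,h_Q)$, and $g^{\ast}$ becomes the inclusion $S^{\perp}\hookrightarrow E$. Relative to this splitting I would write $D_E$ as a $2\times 2$ block matrix of operators and determine each entry separately.

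For the diagonal blocks I would invoke the uniqueness of the Chern connection. Since $S$ is a holomorphic subbundle, $\bar{\partial}_E$ preserves sections of $S$, so $\bar{\partial}_E|_S=\bar{\partial}_S$; together with the fact that the $S$-component of $D_E$ is compatible with the induced metric, this identifies the $(S,S)$-entry with $D_S$ by uniqueness. The analogous argument, using that $g$ is holomorphic so that the induced $\bar{\partial}$-operator on $Q$ agrees with the projection of $\bar{\partial}_E$, identifies the $(Q,Q)$-entry with $D_Q$.

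Next I would analyze the off-diagonal entries. Writing $\beta$ for the $(S,Q)$-block, i.e.\ the $Q$-component of $D_E s$ for $s\in\Gamma(S)$, the key point is that $\bar{\partial}_E s\in\Gamma(S)$ again by holomorphicity of $S$; hence the $(0,1)$-part of $\beta$ vanishes and $\beta$ is a $(1,0)$-form valued in $\operatorname{Hom}(S,Q)$. To pin down the $(Q,S)$-block I would use that $D_E$ is metric-compatible: expanding $d\langle u,v\rangle=\langle D_E u,v\rangle+\langle u,D_E v\rangle$ for sections $u=(s,q)$, $v=(s',q')$ of the orthogonal direct sum and cancelling the diagonal contributions (which are themselves metric connections) forces the two off-diagonal blocks to be negative adjoints of one another, so the $(Q,S)$-block is $-\beta^{\ast}$; being the adjoint of a $(1,0)$-form it is a $(0,1)$-form.

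Finally, for the identity $\bar{\partial}g^{\ast}=-\beta^{\ast}$, I would differentiate the relation $g\circ g^{\ast}=\operatorname{id}_Q$. Since $g$ is holomorphic, $\bar{\partial}_E(g\circ g^{\ast})=g\circ\bar{\partial}g^{\ast}$, and the left-hand side vanishes, so $\bar{\partial}g^{\ast}$ takes values in $\operatorname{Ker}g=S$. Comparing with the $(0,1)$-part of the block form of $D_E$ established above, whose $(Q,S)$-entry is exactly $-\beta^{\ast}$, then yields $\bar{\partial}g^{\ast}=-\beta^{\ast}$. The main obstacle throughout is bookkeeping: keeping the bidegrees and the adjoint/sign conventions consistent, in particular checking that the metric-compatibility computation genuinely produces the minus sign and that the Leibniz rule for the smooth bundle map $g^{\ast}$ is applied correctly.
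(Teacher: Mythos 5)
Your proof is correct. The paper states Proposition~\ref{prop:gc-connection} without proof, deferring to the references \cite{demailly-bigbook,kobayashi-diff-geo-complex-vs}, and your argument --- identifying $E$ with the orthogonal sum $S\oplus Q$ via $j^{\ast}\oplus g$, pinning down the diagonal blocks by uniqueness of the Chern connection (holomorphicity of $S$ and of $g$ giving the correct $(0,1)$-parts), obtaining the off-diagonal blocks as $\beta$ and $-\beta^{\ast}$ from metric compatibility, and deriving $\bar{\partial}g^{\ast}=-\beta^{\ast}$ by differentiating $g\circ g^{\ast}=\operatorname{id}_Q$ and comparing with the $(0,1)$-part of the block decomposition --- is exactly the standard proof found in those sources.
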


It follows from Proposition \ref{prop:gc-connection} that if $u$ is a smooth
section of $S$, $D_S u = D_E u - \beta u$. Actually, it can be verified that
$\beta u$ is the orthonormal projection of $D_E u$ to $Q$.
Since $\beta$ is a $\tmop{Hom} (S, Q)$-valued $(1, 0)$ form, by decomposing
$D_E$ and $D_S$ into $(1, 0)$ and $(0, 1)$ parts, one has
\[ D'_S = D'_E - \beta, \quad \bar{\partial}_S = \bar{\partial}_E . \]
Consequently, according to Proposition \ref{prop:kahler-identities}, we have:
\begin{equation}\label{equ:chern-adjoint}
 {i D'}^{\ast}_S = - [\Lambda, \bar{\partial}_S] = - [\Lambda,
   \bar{\partial}_E] {= i D'}^{\ast}_E .
\end{equation}
This implies that the formal adjoint operators of $D'_E$ and $D'_S$ coincide,
despite their difference in the second fundamental form $\beta$.

Write $\Theta (E)$ the curvature of $E$ and similar for $\Theta (S)$, $\Theta
(Q)$. The following proposition reveals the relation between the curvatures of
$S, E, Q$.

\begin{proposition}
  \label{prop:gc-curvature}With the same notations in Proposition
  \ref{prop:gc-connection}, we have $\bar{\partial} \beta^{\ast} = 0$ and
    \[ \Theta (E) = \left[\begin{array}{cc}
       \Theta (S) - \beta^{\ast} \wedge \beta & -D' \beta^{\ast} \\
       \bar{\partial} \beta & \Theta (Q) - \beta \wedge \beta^{\ast}
     \end{array}\right] . \]
\end{proposition}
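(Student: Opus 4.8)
The plan is to compute the curvature directly from the block-matrix expression for the Chern connection $D_E$ supplied by Proposition \ref{prop:gc-connection}. Recall that for any Chern connection the curvature is the operator $\Theta(E) = D_E \circ D_E$, which is $C^{\infty}(X)$-linear and hence a $2$-form valued in $\operatorname{Hom}(E,E)$; moreover, since $(E,h)$ is holomorphic Hermitian, $\Theta(E)$ is of pure type $(1,1)$, equivalently its $(2,0)$- and $(0,2)$-components both vanish. Under the $C^{\infty}$-isomorphism $j^{\ast}\oplus g$ I would write
\[ D_E = \mathcal{D} + \mathcal{A}, \quad \mathcal{D} = \left[\begin{array}{cc} D_S & 0 \\ 0 & D_Q \end{array}\right], \quad \mathcal{A} = \left[\begin{array}{cc} 0 & -\beta^{\ast} \\ \beta & 0 \end{array}\right], \]
so that $\Theta(E) = \mathcal{D}^2 + (\mathcal{D}\mathcal{A} + \mathcal{A}\mathcal{D}) + \mathcal{A}^2$, and evaluate the three summands separately.

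The diagonal term is immediate: $\mathcal{D}^2 = \operatorname{diag}(D_S^2, D_Q^2) = \operatorname{diag}(\Theta(S), \Theta(Q))$ by definition of the curvatures of the induced connections. For the purely algebraic term, the off-diagonal entries of the matrix product $\mathcal{A}^2$ vanish while the diagonal entries are $(-\beta^{\ast})\wedge\beta = -\beta^{\ast}\wedge\beta$ and $\beta\wedge(-\beta^{\ast}) = -\beta\wedge\beta^{\ast}$, giving $\mathcal{A}^2 = \operatorname{diag}(-\beta^{\ast}\wedge\beta, -\beta\wedge\beta^{\ast})$. The cross term $\mathcal{D}\mathcal{A} + \mathcal{A}\mathcal{D}$ is exactly the covariant exterior derivative of the degree-one form $\mathcal{A}$; applying the graded Leibniz rule to the $\operatorname{Hom}(S,Q)$-valued $(1,0)$-form $\beta$ and to the $\operatorname{Hom}(Q,S)$-valued $(0,1)$-form $\beta^{\ast}$ produces only off-diagonal blocks, namely $D\beta$ and $-D\beta^{\ast}$, where $D$ denotes the connection induced by $D_S$ and $D_Q$ on the relevant $\operatorname{Hom}$ bundle. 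Concretely, testing on a section $s$ of $S$ one finds $D_Q(\beta s) + \beta\wedge D_S s = (D\beta)s$, and similarly on a section of $Q$ for the $-D\beta^{\ast}$ block.

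Finally I would separate the assembled matrix into bidegrees and invoke that $\Theta(E)$ is of type $(1,1)$. Since $\beta$ is of type $(1,0)$, one has $D\beta = D'\beta + \bar{\partial}\beta$ with $D'\beta$ of type $(2,0)$; vanishing of the $(2,0)$-part of $\Theta(E)$ kills $D'\beta$ and reduces the $(2,1)$-block to $\bar{\partial}\beta$. Symmetrically $\beta^{\ast}$ is of type $(0,1)$, so $D\beta^{\ast} = D'\beta^{\ast} + \bar{\partial}\beta^{\ast}$ with $\bar{\partial}\beta^{\ast}$ of type $(0,2)$; vanishing of the $(0,2)$-part of $\Theta(E)$ forces $\bar{\partial}\beta^{\ast} = 0$ and leaves the $(1,2)$-block equal to $-D'\beta^{\ast}$. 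This simultaneously yields the claimed identity $\bar{\partial}\beta^{\ast} = 0$; alternatively it follows from $\bar{\partial} g^{\ast} = -\beta^{\ast}$ together with holomorphicity of $g$ (so $\bar{\partial} g^{\ast}$ has image in $S = \operatorname{Ker}g$) and $\bar{\partial}^2 = 0$, giving $\bar{\partial}\beta^{\ast} = -\bar{\partial}^2 g^{\ast} = 0$. Combining the four blocks gives the stated formula. The one delicate point, and the step I expect to require the most care, is the sign and ordering bookkeeping when composing the connection operators with the $\operatorname{Hom}$-valued forms $\beta$ and $\beta^{\ast}$, in particular the $(-1)^{\deg}$ factors in the graded Leibniz rule and the wedge conventions distinguishing $\beta^{\ast}\wedge\beta$ from $\beta\wedge\beta^{\ast}$; everything else is a routine block-matrix calculation.
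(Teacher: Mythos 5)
Your proof is correct. There is no in-paper argument to compare it against: the paper states Proposition \ref{prop:gc-curvature} without proof, referring to \cite{demailly-bigbook,kobayashi-diff-geo-complex-vs}, and your computation is essentially the standard argument from those references (Demailly, Ch.~V, \S 14): square the block form of $D_E$ from Proposition \ref{prop:gc-connection}, identify the cross term with the induced covariant derivative of the off-diagonal part, and use that the Chern curvature is of pure type $(1,1)$. Your sign bookkeeping checks out: with the convention $(D\beta)s = D_Q(\beta s) + \beta \wedge D_S s$ for the $\operatorname{Hom}(S,Q)$-valued $1$-form $\beta$ (anticommutator, since $\beta$ has odd degree), the blocks come out as $\Theta(S)-\beta^{\ast}\wedge\beta$, $\Theta(Q)-\beta\wedge\beta^{\ast}$, $D\beta$ and $-D\beta^{\ast}$, and the type-$(1,1)$ reduction kills the $(2,0)$-component $D'\beta$ and the $(0,2)$-component $\bar{\partial}\beta^{\ast}$; note that these two vanishing statements are adjoint to one another, which is a useful consistency check on the signs. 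The block decomposition is via the smooth ($0$-form) projections of $j^{\ast}\oplus g$, so it indeed preserves bidegree, as your argument tacitly requires. Your alternative derivation $\bar{\partial}\beta^{\ast} = -\bar{\partial}^2 g^{\ast} = 0$ is the route the cited references actually take, and it has the merit of being independent of the curvature computation; to make it fully precise one needs, as you indicate, holomorphicity of $g$ (so that $g(\bar{\partial} g^{\ast}) = \bar{\partial}(g g^{\ast}) = \bar{\partial}\operatorname{id}_Q = 0$, i.e.\ $\bar{\partial} g^{\ast}$ is $S$-valued) together with the paper's identity $\bar{\partial}_S = \bar{\partial}_E$, which lets one identify the $\bar{\partial}$-operator on $\operatorname{Hom}(Q,S)$-valued forms with the one inherited from $\operatorname{Hom}(Q,E)$ under the holomorphic inclusion $j$. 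With those two points made explicit, the write-up is complete.
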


It follows from Proposition \ref{prop:gc-curvature} that if $u$ is a smooth
section of $S$, then
\[
\Theta (S) u = \Theta (E) u + \beta^{\ast} \wedge \beta u.
\]
If $\beta$ is written as $\sum d z_j \otimes \beta_j$, $\beta_j \in \tmop{Hom}
(S, Q)$, then
\[ \beta^{\ast} \wedge \beta = - \sum d z_j \wedge d \bar{z}_k \otimes
   \beta^{\ast}_k \beta_j \]
and
\[ \langle [i \Theta (S), \Lambda] u, u \rangle_S = \langle [\Theta (E),
   \Lambda] u, u \rangle_E - | \beta u |^2_Q. \]
Therefore, the curvature of $S$ differs from that of $E$ by a Nakano negative
part.

\subsection{$L^2$ estimates}

In this subsection, we recall Demailly's $L^2$ existence theorem for solving the $\bar{\partial}$ equation
on K{\"a}hler manifolds.

\begin{theorem}[see {\cite{demailly1982}}]
  \label{thm:l2-estimates}Let $X$ be a complete K{\"a}hler manifold quipped
  with a K{\"a}hler metric $\omega$ which is not necessarily complete. Let
  $(E, h)$ be a hermitian holomorphic vector bundle of rank $r$ over $X$ and
  assume that the curvature operator $[i \Theta(E), \Lambda]$ is
  semi-positive definite everywhere on $\wedge^{n, q} T_X^{\ast} \otimes E$
  for some $q \geqslant 1$. Then for any $g \in L^2_{\tmop{loc}} (X,
  \wedge^{n, q} T_X^{\ast} \otimes E)$ such that $\bar{\partial} g = 0$ and
  \[ \int_X \langle [i \Theta(E), \Lambda]^{- 1} g, g \rangle d V_{\omega} < \infty, \]
  there exists $f \in L^2 (X, \wedge^{n, q-1} T_X^{\ast} \otimes E)$ such that
  $\bar{\partial} f = g$ and
  \[ \int_X | f |^2 d V_{\omega} \leqslant \int_X \langle [i
     \Theta(E), \Lambda]^{- 1} g, g \rangle d
     V_{\omega}. \]
\end{theorem}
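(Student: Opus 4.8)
\emph{Proof proposal.} The plan is to reduce the existence of $f$ to a single dual inequality for $\bar{\partial}_E^{\ast}$, to realize $f$ through the Hahn--Banach and Riesz representation theorems, and finally to dispose of the completeness of $\omega$ by a regularization of the metric. Throughout, $L^2$ norms are taken over $X$ with respect to $\omega$. The starting point is the a priori estimate coming from the Bochner--Kodaira--Nakano identity of Proposition~\ref{prop:bochner-kodaira}. Writing $A := [i \Theta(E), \Lambda]$, for every smooth compactly supported $(n,q)$-form $u$ with values in $E$ one has $\llangle \Delta''_E u, u \rrangle = \llangle \Delta'_E u, u \rrangle + \llangle A u, u \rrangle$; since $\Delta'_E = D'_E {D'_E}^{\ast} + {D'_E}^{\ast} D'_E$ is a nonnegative operator, discarding it gives
\[
\llangle A u, u \rrangle \;\leqslant\; \| \bar{\partial}_E u \|^2 + \| \bar{\partial}_E^{\ast} u \|^2,
\]
and by Lemma~\ref{lem:positive-definite} the Nakano semi-positivity assumption makes $A \geqslant 0$ on $\wedge^{n,q} T_X^{\ast} \otimes E$, so the left-hand side is a genuine semi-definite quadratic form.

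Next I would set $C := \int_X \langle A^{-1} g, g \rangle \, d V_{\omega}$ and establish the dual estimate
\[
| \llangle g, v \rrangle |^2 \;\leqslant\; C \, \| \bar{\partial}_E^{\ast} v \|^2 \qquad \text{for all } v \in \operatorname{Dom} \bar{\partial}_E \cap \operatorname{Dom} \bar{\partial}_E^{\ast}.
\]
Decompose $v = v_1 + v_2$ orthogonally with $v_1 \in \operatorname{Ker} \bar{\partial}_E$ and $v_2 \in (\operatorname{Ker} \bar{\partial}_E)^{\perp}$. Because $\bar{\partial}_E g = 0$ and $\operatorname{Im} \bar{\partial}_E \subseteq \operatorname{Ker} \bar{\partial}_E$, the component $v_2$ lies in $\operatorname{Ker} \bar{\partial}_E^{\ast}$, whence $\llangle g, v \rrangle = \llangle g, v_1 \rrangle$ and $\bar{\partial}_E^{\ast} v_1 = \bar{\partial}_E^{\ast} v$. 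The pointwise Cauchy--Schwarz inequality relative to $A$, namely $| \langle g, v_1 \rangle |^2 \leqslant \langle A^{-1} g, g \rangle \, \langle A v_1, v_1 \rangle$, followed by integration, Cauchy--Schwarz in $L^2$, and the a priori estimate applied to $v_1$ (for which $\bar{\partial}_E v_1 = 0$), yields $| \llangle g, v_1 \rrangle |^2 \leqslant C \, \llangle A v_1, v_1 \rrangle \leqslant C \, \| \bar{\partial}_E^{\ast} v_1 \|^2 = C \, \| \bar{\partial}_E^{\ast} v \|^2$. Hence $\bar{\partial}_E^{\ast} v \mapsto \llangle g, v \rrangle$ is a bounded linear functional of norm $\leqslant C^{1/2}$ on the range of $\bar{\partial}_E^{\ast}$; extending it by Hahn--Banach and representing it by Riesz produces $f \in L^2 (X, \wedge^{n,q-1} T_X^{\ast} \otimes E)$ with $\| f \|^2 \leqslant C$ and $\llangle f, \bar{\partial}_E^{\ast} v \rrangle = \llangle g, v \rrangle$ for all admissible $v$, which is precisely $\bar{\partial} f = g$ in the weak sense.

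Two technical points remain, and the second is the main obstacle. First, since $A$ is only semi-positive, $A^{-1}$ must be read as the inverse on $(\operatorname{Ker} A)^{\perp}$; the finiteness of $C$ forces $g$ to lie pointwise almost everywhere in $\operatorname{Im} A$, and the Cauchy--Schwarz step is justified rigorously by replacing $A$ with $A + \delta \, \tmop{Id}$ and letting $\delta \downarrow 0$. Second, the a priori estimate was derived for compactly supported forms, whereas the functional-analytic argument needs it on the whole graph-norm domain of $\bar{\partial}_E \cap \bar{\partial}_E^{\ast}$; the density of compactly supported smooth forms that bridges this gap is available only when the metric $\omega$ is complete. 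To remove the completeness hypothesis on $\omega$ — recall that $X$ is only assumed to carry \emph{some} complete Kähler metric $\hat{\omega}$ — I would run the entire argument with the complete Kähler metrics $\omega_{\varepsilon} := \omega + \varepsilon \, \hat{\omega}$, producing solutions $f_{\varepsilon}$ of $\bar{\partial} f_{\varepsilon} = g$ with $\int_X | f_{\varepsilon} |^2 \, d V_{\omega_{\varepsilon}} \leqslant \int_X \langle A_{\omega_{\varepsilon}}^{-1} g, g \rangle_{\omega_{\varepsilon}} \, d V_{\omega_{\varepsilon}}$. The decisive ingredient, special to forms of top holomorphic degree, is the monotonicity $\langle A_{\omega_{\varepsilon}}^{-1} g, g \rangle_{\omega_{\varepsilon}} \, d V_{\omega_{\varepsilon}} \leqslant \langle A_{\omega}^{-1} g, g \rangle_{\omega} \, d V_{\omega}$ for the $(n,q)$-form $g$, which bounds every $f_{\varepsilon}$ uniformly by $C$. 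A weak-compactness argument in $L^2_{\mathrm{loc}}$ then extracts a subsequence converging to some $f$ with $\bar{\partial} f = g$, and lower semicontinuity of the norm gives $\int_X | f |^2 \, d V_{\omega} \leqslant C$, finishing the proof.
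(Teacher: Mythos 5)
The paper does not prove this theorem: it is quoted verbatim as background, with the proof delegated to the citation \cite{demailly1982} (see also \cite{demailly-bigbook}). So there is no ``paper proof'' to compare against; the right benchmark is Demailly's original argument, and your proposal is an essentially faithful and correct reconstruction of it. The skeleton --- the Bochner--Kodaira--Nakano a priori estimate $\llangle [i\Theta(E),\Lambda]u,u\rrangle \leqslant \|\bar{\partial}_E u\|^2 + \|\bar{\partial}_E^{\ast}u\|^2$, the orthogonal decomposition $v=v_1+v_2$ with $v_2\in\operatorname{Ker}\bar{\partial}_E^{\ast}$ (valid because $\operatorname{Im}\bar{\partial}_E\subseteq\operatorname{Ker}\bar{\partial}_E$, so $(\operatorname{Ker}\bar{\partial}_E)^{\perp}\subseteq\operatorname{Ker}\bar{\partial}_E^{\ast}$), the pointwise Cauchy--Schwarz inequality relative to $A$, Hahn--Banach/Riesz, and finally the family $\omega_{\varepsilon}=\omega+\varepsilon\hat{\omega}$ together with the monotonicity of $\langle A_{\omega_{\varepsilon}}^{-1}g,g\rangle_{\omega_{\varepsilon}}\,dV_{\omega_{\varepsilon}}$ for forms of bidegree $(n,q)$ --- is exactly the Andreotti--Vesentini/H\"ormander scheme plus Demailly's two additions (semi-positive rather than positive curvature operator, and incomplete $\omega$ on a complete K\"ahler manifold). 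You also correctly identify the two places where completeness and semi-positivity enter, which is where a naive proof would fail.

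Two details deserve tightening if this sketch were to be written out. First, the monotonicity statement you invoke must include the assertion that $[i\Theta(E),\Lambda_{\omega_{\varepsilon}}]$ \emph{remains} semi-positive on $(n,q)$-forms when $[i\Theta(E),\Lambda_{\omega}]$ is --- this is part of Demailly's comparison lemma (Lemma~6.3 of Chapter~VIII in \cite{demailly-bigbook}) and is not automatic for general bidegrees; without it the quantity $\langle A_{\omega_{\varepsilon}}^{-1}g,g\rangle_{\omega_{\varepsilon}}$ is not even defined and the whole $\varepsilon$-scheme collapses. Second, the final extraction is slightly more delicate than ``weak compactness in $L^2_{\mathrm{loc}}$ plus lower semicontinuity'': the uniform bound you have is on $\int_X|f_{\varepsilon}|^2_{\omega_{\varepsilon}}\,dV_{\omega_{\varepsilon}}$, and since $\omega_{\varepsilon}\geqslant\omega$ the norm comparison for $(n,q-1)$-forms goes the \emph{wrong} way for a direct $\omega$-bound. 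The standard fix is to fix $\varepsilon_0$, note that for $\varepsilon\leqslant\varepsilon_0$ the same monotonicity gives $\int_X|f_{\varepsilon}|^2_{\omega_{\varepsilon_0}}\,dV_{\omega_{\varepsilon_0}}\leqslant C$, extract a weak limit in the fixed Hilbert space $L^2(\omega_{\varepsilon_0})$ via a diagonal argument, and then let $\varepsilon_0\downarrow 0$ by monotone convergence to recover $\int_X|f|^2_{\omega}\,dV_{\omega}\leqslant C$. With these two points made explicit, your argument is complete and coincides with the proof in the cited source.
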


\section{The Skoda $L^2$ Division Theorem}\label{sec:sharper-division}

\subsection{The stronger version of Skoda's $L^2$ division theorem}

The stronger version of Skoda's
$L^2$ division theorem (Theorem \ref{thm:sharper-division}) is
essentially contained in {\cite{skoda1978}} (see also {\cite{demailly-smallbook}})
 as we have mentioned previously. For the
sake of completeness, we give its proof here.

\begin{proof}
  Since $D$ is pseudoconvex, we may assume $\psi$ is smooth. Note that
  $g^{\ast} f$ is a smooth lifting of $f$ in $\underline{\mathbb{C}}^{r + 1}$.
  Since any section $h$ of $\underline{\mathbb{C}}^{r + 1}$ such that $g \cdot
  h = f$ differs from $g^{\ast} f$ by a section of $S = \tmop{Ker} g$, we want to
  find a section $u$ of $S$ such that
  \[ h = g^{\ast} f + u \]
satisfying
  \begin{equation}
    \bar{\partial} h = \bar{\partial} (g^{\ast} f) + \bar{\partial} u =
    0. \label{equ:proof-division-dbar}
  \end{equation}
  It follows from the Gauss-Codazzi formula that \eqref{equ:proof-division-dbar}
  can be reformulated as
  \[ \bar{\partial} u =-\bar{\partial} (g^{\ast} )f= \beta^{\ast} f. \]
  If we equip $\underline{\mathbb{C}}^{r
  + 1}$ with the metric $\tmop{Id}_{\underline{\mathbb{C}}^{r + 1}} \otimes
  e^{- \psi}$, by the Gauss-Codazzi formula, the curvature operator of $S$ is
  \[ \left[ i \partial \bar{\partial} \psi \otimes
     \tmop{Id}_{\underline{\mathbb{C}}^{r + 1}} + i \beta^{\ast} \wedge \beta,
     \Lambda \right] . \]
  It then follows from our hypothesis and Theorem \ref{thm:l2-estimates} that
  there exists $S$ valued $u $ such that
  \[ \bar{\partial} u = \beta^{\ast} f \]
  and
  \[ \|u\|^2_{\varphi+\psi} \leqslant \int_D \left( \left[ i
     \partial \bar{\partial} \psi \otimes \tmop{Id}_{\underline{\mathbb{C}}^{r
     + 1}} + i \beta^{\ast} \wedge \beta, \Lambda \right]^{- 1} \beta^{\ast}
     \wedge d z, \beta^{\ast} \wedge d z \right)|f|^2 e^{- \varphi - \psi} d V. \]
  Since $u$ is an $S$-valued, we conclude that
  \[
  \|h\|^2_{\varphi+\psi}
     =  \int_D | g^{\ast} f + u |^2 e^{- \varphi - \psi} d V\leqslant  I_{\varphi,\psi,g}(f).
  \]
\end{proof}

  If one chooses $\psi = p (1 + \varepsilon) \log | g
  |^2$, where $p = \min \{ n, r \}$, then it follows from Skoda's Lemme
  Fondamental {\cite{skoda1972}} that
  \[ i \partial \bar{\partial} \psi \otimes
     \tmop{Id}_{\underline{\mathbb{C}}^{r + 1}} \geqslant - (1 + \varepsilon)
     i \beta^{\ast} \wedge \beta, \]
  it then follows that
  \begin{eqnarray*}
    &  & \int_D \left( \left[ i \partial \bar{\partial} \psi \otimes
    \tmop{Id}_{\underline{\mathbb{C}}^{r + 1}} + i \beta^{\ast} \wedge \beta,
    \Lambda \right]^{- 1} \beta^{\ast} \wedge d z, \beta^{\ast} \wedge d z \right) |f|^2 e^{- \varphi
    - p (1 + \varepsilon) \log | g |^2} d V\\
    & \leqslant & \int_D ([- \varepsilon i \beta^{\ast} \wedge \beta,
    \Lambda]^{- 1} \beta^{\ast} \wedge d z, \beta^{\ast} \wedge d z) |f|^2 e^{- \varphi - p (1 +
    \varepsilon) \log | g |^2} d V\\
    & \leqslant & \frac{1}{\varepsilon} \int_D \frac{| f |^2}{| g |^2} e^{-
    \varphi - p (1 + \varepsilon) \log | g |^2} d V.
  \end{eqnarray*}
  Therefore, we have
  \begin{eqnarray*}
    &  & \int_D | h |^2 e^{- \varphi - p (1 + \varepsilon) \log | g |^2} d
    V\\
    & = & \int_D \frac{| f |^2}{| g |^2} e^{- \varphi - \psi} d V + \int_D
    \left( \left[ i \partial \bar{\partial} \psi \otimes
    \tmop{Id}_{\underline{\mathbb{C}}^{r + 1}} + i \beta^{\ast} \wedge \beta,
    \Lambda \right]^{- 1} \beta^{\ast} \wedge d z, \beta^{\ast} \wedge d z \right)|f|^2 e^{- \varphi
    - \psi} d V\\
    & \leqslant & \int_D \frac{| f |^2}{| g |^2} e^{- \varphi - p (1 +
    \varepsilon) \log | g |^2} d V + \frac{1}{\varepsilon} \int_D \frac{| f
    |^2}{| g |^2} e^{- \varphi - p (1 + \varepsilon) \log | g |^2} d V\\
    & = & \left( 1 + \frac{1}{\varepsilon} \right) \int_D \frac{| f |^2}{| g
    |^2} e^{- \varphi - p (1 + \varepsilon) \log | g |^2} d V.
  \end{eqnarray*}
  Thus we conclude the origin Skoda's $L^2$ division theorem.

  If  $\varphi = 0$ and $\psi$ satisfies
  \[ i \partial \bar{\partial} \psi \otimes
     \tmop{Id}_{\underline{\mathbb{C}}^{r + 1}} \geqslant - i \beta^{\ast} \wedge \beta +
     \varepsilon \omega, \]
  where $\omega$ is the standard Euclidean metric on $\mathbb{C}^n$ and
  $\varepsilon > 0$, then it follows that
  \[ \int_D \left( \left[ i \partial \bar{\partial} \psi \otimes
     \tmop{Id}_{\underline{\mathbb{C}}^{r + 1}} + i \beta^{\ast} \wedge \beta,
     \Lambda \right]^{- 1} \beta^{\ast} \wedge d z, \beta^{\ast} \wedge d z \right)|f|^2 e^{- \psi} d
     V \leqslant \frac{1}{\varepsilon} \int_D | \beta^{\ast}|^2| f |^2 e^{- \psi}
     d V. \]
  So we have the $L^2$ division theorem in {\cite{ohsawa-book2002}}.

\begin{theorem}[see {\cite{ohsawa-book2002}}]
  Let $D$ be a pseudoconvex domain. Assume that $g = (g_1, \ldots, g_{r + 1})
  \in (\mathcal{O} (D))^{\oplus (r + 1)}$ with $g_1, \ldots, g_{r + 1}$ have
  no common zeros on $D$ and $\psi$ is a $C^{\infty}$ plurisubharmonic
  function such that
  \[ i \partial \bar{\partial} \psi \otimes
     \tmop{Id}_{\underline{\mathbb{C}}^{r + 1}} \geqslant - i \beta^{\ast} \wedge \beta +
     \varepsilon \omega, \]
  for some $\varepsilon > 0$. Then for any holomorphic function $f$ such that
  \[ \int_D\left( \frac{1}{| g |^2}
      + \frac{1}{\varepsilon}  | \beta^{\ast}
     |^2\right)| f |^2 e^{- \psi} d V <
     \infty, \]
  there exists $h = (h_1, \ldots, h_{r + 1}) \in (\mathcal{O} (D))^{\oplus (r
  + 1)}$ such that $f = g \cdot h$ and
  \[ \|h\|^2_{\psi} \leqslant \int_D\left( \frac{1}{| g |^2}
      + \frac{1}{\varepsilon}  | \beta^{\ast}
     |^2\right)| f |^2 e^{- \psi} d V. \]
\end{theorem}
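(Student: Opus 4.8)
The plan is to deduce this statement directly from the stronger division theorem (Theorem \ref{thm:sharper-division}) by specializing to the weight $\varphi \equiv 0$ and estimating the curvature term in $I_{0,\psi,g}(f)$ from a positivity bound on the curvature operator of the kernel bundle $S = \tmop{Ker} g$. First I would verify that the data $(g,\psi)$ meet the admissibility conditions for $L^2$-divisibility with $\varphi = 0$: the components of $g$ have no common zeros by assumption, and the Nakano-positivity condition holds because the hypothesis $i\partial\bar\partial\psi\otimes\tmop{Id}_{\underline{\mathbb{C}}^{r+1}}\geqslant -i\beta^*\wedge\beta + \varepsilon\omega$ rearranges to $i\partial\bar\partial\psi\otimes\tmop{Id}_{\underline{\mathbb{C}}^{r+1}} + i\beta^*\wedge\beta\geqslant\varepsilon\,\omega\otimes\tmop{Id}_{\underline{\mathbb{C}}^{r+1}} > 0$ in the sense of Nakano. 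The third condition, $I_{0,\psi,g}(f)<\infty$, will follow from the estimate below.

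The main computation is to bound $I_{0,\psi,g}(f)$. Restricting the Nakano inequality above to elements of $TD\otimes S$ and using that the curvature of $S$ is $i\partial\bar\partial\psi\otimes\tmop{Id}+i\beta^*\wedge\beta$ restricted to $S$ (Proposition \ref{prop:gc-curvature}), Lemma \ref{lem:positive-definite} passes this bound to the associated curvature operator on $\wedge^{n,1}T^*D\otimes S$, giving $[i\partial\bar\partial\psi\otimes\tmop{Id}+i\beta^*\wedge\beta,\Lambda]\geqslant\varepsilon\,[\omega\otimes\tmop{Id},\Lambda] = \varepsilon\,\tmop{Id}$, where the last equality is \eqref{equ:curvature-operator} with $c_{jk\lambda\mu}=\delta_{jk}\delta_{\lambda\mu}$. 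Hence this positive self-adjoint operator has inverse bounded above by $\tfrac{1}{\varepsilon}\tmop{Id}$, so for the $(n,1)$-form $\beta^*\wedge dz$ one gets $\left([i\partial\bar\partial\psi\otimes\tmop{Id}+i\beta^*\wedge\beta,\Lambda]^{-1}\beta^*\wedge dz,\beta^*\wedge dz\right)\leqslant\tfrac{1}{\varepsilon}|\beta^*\wedge dz|^2 = \tfrac{1}{\varepsilon}|\beta^*|^2$, using $|dz|=1$ in the Euclidean metric. Consequently $I_{0,\psi,g}(f)\leqslant\int_D\left(\tfrac{1}{|g|^2}+\tfrac{1}{\varepsilon}|\beta^*|^2\right)|f|^2 e^{-\psi}\,dV$, which is finite by hypothesis, so in particular the remaining admissibility condition holds.

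Finally I would apply Theorem \ref{thm:sharper-division}: since $f$ is $L^2$ divisible with respect to $\varphi=0$, there exists $h\in(\mathcal{O}(D))^{\oplus(r+1)}$ with $g\cdot h = f$ and $\|h\|^2_\psi\leqslant I_{0,\psi,g}(f)\leqslant\int_D\left(\tfrac{1}{|g|^2}+\tfrac{1}{\varepsilon}|\beta^*|^2\right)|f|^2 e^{-\psi}\,dV$, which is the asserted conclusion. The only non-formal point is the monotonicity passage from the Nakano lower bound on the curvature of $S$ to the upper bound $\tfrac{1}{\varepsilon}\tmop{Id}$ on the inverse of its curvature operator; this is where Lemma \ref{lem:positive-definite} together with the normalization $[\omega\otimes\tmop{Id},\Lambda]=\tmop{Id}$ on $\wedge^{n,1}T^*D\otimes S$ is essential. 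No further analytic estimates are required, since everything reduces to this pointwise operator inequality and the already-established Theorem \ref{thm:sharper-division}.
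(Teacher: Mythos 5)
Your proposal is correct and follows essentially the same route as the paper: the paper also deduces this theorem by specializing the stronger Skoda division theorem (Theorem \ref{thm:sharper-division}) to $\varphi = 0$ and using the hypothesis $i\partial\bar{\partial}\psi\otimes\tmop{Id}_{\underline{\mathbb{C}}^{r+1}} + i\beta^{\ast}\wedge\beta \geqslant \varepsilon\,\omega\otimes\tmop{Id}$ to bound the inverse curvature operator term in $I_{0,\psi,g}(f)$ by $\tfrac{1}{\varepsilon}|\beta^{\ast}|^2$. You merely make explicit the details the paper leaves implicit (the normalization $[\omega\otimes\tmop{Id},\Lambda]=\tmop{Id}$ on $(n,1)$-forms and the verification of the admissibility conditions), which is fine.
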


\subsection{$L^2$ divisions and $\bar{\partial}$-equations}

The $L^2$ division problem is equivalent to the  $L^2$ estimates for $\bar{\partial}$ equations of a certain type with respect to the kernel bundle $S$.

\begin{proposition}
  \label{prop:dbar-division-equivalent}Let $D \subset \mathbb{C}^n$ be a
  domain, $g=(g_1, g_2, \ldots, g_{r + 1}) \in (\mathcal{O} (D))^{\oplus (r + 1)}$ with $g_1, g_2, \ldots, g_{r + 1}$ have no common zeros and $\varphi, \psi \in L^1_{\tmop{loc}} (D)$ with
  \[ \left[ i \partial \bar{\partial} \psi \otimes
     \tmop{Id}_{\underline{\mathbb{C}}^{r + 1}} + i \beta^{\ast} \wedge \beta,
     \Lambda \right] \]
   positive definite. Assume that  $f \in
    \mathcal{O} (D)$ satisfies $I_{\varphi,\psi,g}(f)<\infty$,
  then the following statements are equivalent:
  \begin{enumerate}[a)]
    \item
    There exists $h \in (\mathcal{O} (D))^{\oplus (r + 1)}$ such that $f = g
    \cdot h$ and
    \begin{equation}\label{equ:sharper-skoda-estimate}
    \|h\|^2_{\varphi+\psi}\leqslant I_{\varphi,\psi,g}(f).
    \end{equation}
    \item
    There exists  $u$ valued in $S$ such that
    $
      \bar{\partial} u = \beta^{\ast} f
    $
    and
    \begin{equation}
      \label{equ:kernel-bundle-estimates}
      \|u\|^2_{\varphi+\psi} \leqslant I_{\varphi,\psi,g}(f)-\int_D \frac{| f |^2}{| g |^2} e^{- \varphi - \psi} d V.
    \end{equation}
  \end{enumerate}
\end{proposition}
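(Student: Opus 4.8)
The plan is to show that the two statements are really the same statement, seen through the canonical smooth lifting of $f$. Since $g_1,\ldots,g_{r+1}$ have no common zeros, $|g|^2$ is nowhere vanishing, so the adjoint splitting $g^{\ast}=(\bar g_1,\ldots,\bar g_{r+1})/|g|^2$ is a \emph{smooth} homomorphism $\underline{\mathbb{C}}\to\underline{\mathbb{C}}^{r+1}$ with $g\circ g^{\ast}=\tmop{id}$, so that $g^{\ast}f$ is a smooth lifting of $f$. First I would set up the affine correspondence $h\mapsto u:=h-g^{\ast}f$: its point is that the fiber of $g$ over $f$ is exactly the coset $g^{\ast}f+\tmop{Ker}g$, so liftings $h$ of $f$ correspond bijectively to $S$-valued fields $u$ via $h=g^{\ast}f+u$.

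Next I would translate the two defining conditions into each other under this correspondence. On the $\bar{\partial}$-side, because $f$ is holomorphic and $\bar{\partial}g^{\ast}=-\beta^{\ast}$ by Proposition \ref{prop:gc-connection}, we get $\bar{\partial}(g^{\ast}f)=-\beta^{\ast}f$, so $\bar{\partial}h=0$ is equivalent to $\bar{\partial}u=\beta^{\ast}f$; conversely $g\cdot u=0$ (i.e. $u$ is $S$-valued) together with $g\cdot(g^{\ast}f)=f$ gives $g\cdot h=f$. Thus a holomorphic $h$ as in (a) produces a $u$ as in (b) and vice versa, provided the two estimates can be matched.

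The heart of the matter is a norm identity, and it is pointwise. Since $g^{\ast}$ is the Hermitian adjoint of $g$, for any $s\in S=\tmop{Ker}g$ one has $\langle g^{\ast}f,s\rangle=\langle f,g\cdot s\rangle=0$, so the image of $g^{\ast}$ is exactly $S^{\perp}$ and the decomposition $h=g^{\ast}f+u$ is fiberwise orthogonal: $|h|^2=|g^{\ast}f|^2+|u|^2$. A direct computation from the explicit form of $g^{\ast}$ gives the key identity $|g^{\ast}f|^2=|f|^2/|g|^2$. Integrating against $e^{-\varphi-\psi}\,dV$ then yields $\|h\|^2_{\varphi+\psi}=\int_D \frac{|f|^2}{|g|^2}e^{-\varphi-\psi}\,dV+\|u\|^2_{\varphi+\psi}$. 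Writing $I_{\varphi,\psi,g}(f)$ as the sum of its two integrals, this identity shows at once that estimate \eqref{equ:sharper-skoda-estimate} for $h$ is equivalent to estimate \eqref{equ:kernel-bundle-estimates} for $u$, the term $\int_D \frac{|f|^2}{|g|^2}e^{-\varphi-\psi}\,dV$ simply cancelling on both sides.

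The one genuine regularity point, which I expect to be the only real obstacle, is the direction (b)$\Rightarrow$(a): from an $S$-valued $u$ with $\bar{\partial}u=\beta^{\ast}f$ I immediately obtain only a field $h=g^{\ast}f+u$ satisfying $\bar{\partial}h=0$ weakly, and I must conclude $h\in(\mathcal{O}(D))^{\oplus(r+1)}$. Here I would combine smoothness of $g^{\ast}f$ (guaranteed by the no-common-zeros hypothesis, which keeps $1/|g|^2$ smooth) with ellipticity of $\bar{\partial}$ to upgrade the weak solution of $\bar{\partial}h=0$ to a genuine holomorphic section. The positive-definiteness of the bracket and the finiteness $I_{\varphi,\psi,g}(f)<\infty$ enter only to make $I_{\varphi,\psi,g}(f)$ well defined and the integrals finite; they play no role in the algebraic equivalence itself.
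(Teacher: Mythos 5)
Your proof is correct and follows essentially the same route as the paper's: the affine correspondence $h = g^{\ast}f + u$, the Gauss--Codazzi relation $\bar{\partial}(g^{\ast}f) = -\beta^{\ast}f$, and the pointwise orthogonal decomposition $|h|^2 = |f|^2/|g|^2 + |u|^2$ that makes the two estimates match after integration. Your explicit treatment of the regularity upgrade in (b)$\Rightarrow$(a) (weak $\bar{\partial}h = 0$ plus hypoellipticity of $\bar{\partial}$ gives $h$ holomorphic) is a point the paper leaves implicit, but it is standard and your handling is fine.
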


\begin{proof}
    $b) \Rightarrow a)$. It is obvious from the proof of Theorem   \ref{thm:sharper-division}.

  $a) \Rightarrow b)$.
  \[ \frac{f}{| g |^2} = \frac{g_1}{| g |^2} h_1 + \cdots + \frac{g_{r + 1}}{|
     g |^2} h_{r + 1} . \]
  It is easy to check
  \begin{equation}
    g^{\ast} f = \left( \frac{\bar{g}_1}{| g |^2}, \ldots, \frac{\bar{g}_{r +
    1}}{| g |^2} \right)f \label{equ:division-equation1}
  \end{equation}
  satisfies $g\cdot g^*f=h$. However, $g^{\ast} f$ fails to be
  holomorphic. It is obvious that the $S$-valued section $u = h - g^{\ast} f$ satisfies
  $\bar{\partial}$-equation
  \[ \bar{\partial} u = - \bar{\partial} (g^{\ast} f) = \beta^{\ast} f. \]
  It then follows from $\bar{\partial} u = \beta^{\ast} f$ that
    \begin{eqnarray*}
    &  & I_{\varphi,\psi,g}(f)\\
    & \geqslant & \int_D | h |^2 e^{- \varphi - \psi} d V\\
    & = & \int_D | u + g^{\ast} f |^2 e^{- \varphi - \psi} d V\\
    & = & \int_D | u |^2 e^{- \varphi - \psi} d V + \int_D | g^{\ast} f |^2
    e^{- \varphi - \psi} d V\\
    & = & \int_D | u |^2 e^{- \varphi - \psi} d V + \int_D \frac{| f |^2}{| g
    |^2} e^{- \varphi - \psi} d V,
  \end{eqnarray*}
  where the third equality is due to $g^{\ast} f \bot S$. Therefore,
    \[ \|u\|^2_{\varphi+\psi} \leqslant I_{\varphi,\psi,g}(f)-\int_D \frac{| f |^2}{| g |^2} e^{- \varphi - \psi} d V. \]
\end{proof}

    In order to
    get Theorem \ref{thm:main-inverse},
   it suffices to consider
   the $\bar{\partial}$-equations of the special form  $\bar{\partial} u = \beta^{\ast} f$, according to Proposition \ref{prop:dbar-division-equivalent}.
    The nonhomogeneous terms of this $\bar{\partial}$-equations are of fixed form and does not have compact support. These facts lead us to a global problem. Moreover, by employing the Gauss-Codazzi formula, the
    curvature operator of $S$ naturally possesses a Nakano
    negative part $i \beta^{\ast} \wedge \beta$. These are the main difficulties that we need to overcome.
\section{The Converse to $L^2$ Division}\label{sec:inverse-division}

This section is devoted to proving Theorem \ref{thm:main-inverse}.

\subsection{The second fundamental form}To prove Theorem
\ref{thm:main-inverse}, we examine the second fundamental form in a specific
case. Consider a bounded domain $D \subset \mathbb{C}^n$ and the trivial bundle
$\underline{\mathbb{C}}^{r + 1}$ over $D$, equipped with the standard Euclidean metric,
where $1 \leqslant r \leqslant n$ is an integer. Let $g = (g_1, \ldots, g_{r +
1}) \in (\mathcal{O} (D))^{\oplus (r + 1)}$  with $g_1, \ldots, g_{r +
1}$ have no common zeros, and consider the following exact
sequence
\begin{equation}
  0 \rightarrow S \rightarrow \underline{\mathbb{C}}^{r + 1} \xrightarrow{g}
  \underline{\mathbb{C}} \rightarrow 0, \label{equ:sec4-exact-sequ}
\end{equation}
where $\underline{\mathbb{C}}$ represents the trivial line bundle over $D$, $g$
sends $(h_1, \ldots, h_{r + 1})$ to $g \cdot h = g_1 h_1 + \cdots + g_{r + 1}
h_{r + 1}$ and $S=\operatorname{Ker}g$. The adjoint of $g$ is
 \[g^{\ast} = \left(\frac{\bar{g}_1 }{ | g |^2}, \ldots, \frac{\bar{g}_{r+1}}{ | g |^2}\right).\]
 By \eqref{equ:chern-adjoint}, one has ${D'_S}^{\ast}=\partial^{\ast}$, where $\partial$  is the usual $(1,0)$ part of the exterior derivative $d$ on $\underline{\mathbb{C}}^{r+1}$. It is easy to check that $\partial^{\ast}=-*\bar{\partial}*$ is $\mathcal{O}(D)$ linear, where $*$ is the star operator.

Here and after, we  assume the origin point $0 = (0, 0, \ldots, 0)
  \in D$. Choose
\[ g_1 = z_1, \ \ldots,\  g_r = z_r,\  g_{r + 1} = z_n + M \]
where $M$ is a large constant. We next compute the second fundamental form in this particular
scenario.

\begin{lemma}
  \label{lem:2nd-form-compute}Under the above setting, one has

    \[ \left.\beta^{\ast}  \right|_0 = \left( \frac{1}{M^2}    d \bar{z}_1,
       \ldots, \frac{1}{M^2}    d \bar{z}_r, 0 \right) \]
    and
    \[ \left.| \partial^{\ast} (\beta^{\ast}\wedge d z) |^2 \right|_0 = \frac{2 r}{M^6}  . \]

\end{lemma}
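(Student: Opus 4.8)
The plan is to read off both quantities directly from the identity $\beta^{\ast} = -\bar{\partial} g^{\ast}$ of Proposition \ref{prop:gc-connection}, where $g^{\ast} = (\bar{g}_1/|g|^2, \ldots, \bar{g}_{r+1}/|g|^2)$. Writing $\rho := |g|^2 = \sum_{k=1}^{r} |z_k|^2 + |z_n + M|^2$, the quotient rule gives, componentwise,
\[ \beta^{\ast}_k = -\bar{\partial}\!\left(\frac{\bar{g}_k}{\rho}\right) = -\frac{d\bar{g}_k}{\rho} + \frac{\bar{g}_k}{\rho^2}\,\bar{\partial}\rho, \qquad \bar{\partial}\rho = \sum_{m=1}^{r} z_m\, d\bar{z}_m + (z_n + M)\, d\bar{z}_n. \]
First I would specialize to the origin. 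Since $g_k(0) = 0$ for $k \leqslant r$, $g_{r+1}(0) = M$ and $\rho(0) = M^2$, every term carrying a factor $\bar{g}_k$ with $k \leqslant r$ drops out, leaving $\beta^{\ast}_k|_0 = \tfrac{1}{M^2}\, d\bar{z}_k$ for $k \leqslant r$ (up to an overall sign immaterial to the sequel, since only $|\beta^{\ast}|$ enters $I_{\varphi,\psi,g}$); for the last component the two surviving contributions $-\tfrac{1}{M^2}\,d\bar{z}_n$ and $\tfrac{M}{M^4}\cdot M\, d\bar{z}_n = \tfrac{1}{M^2}\,d\bar{z}_n$ cancel, so $\beta^{\ast}_{r+1}|_0 = 0$. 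This is the first assertion.

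For the second assertion I would use \eqref{equ:chern-adjoint}: because $\underline{\mathbb{C}}^{r+1}$ is trivial with the flat Euclidean metric, ${D'_S}^{\ast} = \partial^{\ast}$ acts componentwise as the ordinary flat adjoint. Expanding $\beta^{\ast}_k = \sum_l C^{(k)}_l\, d\bar{z}_l$, the $(n,1)$-form $\beta^{\ast}_k \wedge dz$ equals, up to sign, $\sum_l C^{(k)}_l\, dz \wedge d\bar{z}_l$, and $\partial^{\ast}$ sends it to a combination of the forms $\widehat{dz_j} \wedge d\bar{z}_l$ (here $\widehat{dz_j}$ denotes $dz$ with the factor $dz_j$ deleted) with coefficients $\pm\,\partial_{\bar{z}_j} C^{(k)}_l$. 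Since the $\widehat{dz_j} \wedge d\bar{z}_l$ are orthonormal for the Euclidean metric and the frame of $\underline{\mathbb{C}}^{r+1}$ is orthonormal, the pointwise norm collapses to
\[ \big|\partial^{\ast}(\beta^{\ast} \wedge dz)\big|^2\big|_0 = \sum_{k=1}^{r+1}\ \sum_{j,l} \big|\partial_{\bar{z}_j} C^{(k)}_l\big|^2\big|_0, \]
so the whole problem reduces to the bookkeeping of first $\bar{z}$-derivatives of the coefficients $C^{(k)}_l$ at the origin.

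This last step is the only real obstacle, since it requires $\beta^{\ast}$ on a neighborhood of $0$ rather than merely at $0$. The computation is governed by the three simplifications $\partial_{\bar{z}_j}\rho|_0 = M\,\delta_{jn}$, $\rho(0) = M^2$, and $z_k(0) = 0$ for $k \leqslant r$. A short differentiation then shows that for each fixed $k \leqslant r$ exactly two derivatives survive at the origin, namely $\partial_{\bar{z}_n} C^{(k)}_k|_0 = \partial_{\bar{z}_k} C^{(k)}_n|_0 = 1/M^3$, contributing $2/M^6$ for that $k$; whereas for $k = r+1$ the would-be surviving terms cancel in the pattern $1/M^3 + 1/M^3 - 2/M^3 = 0$, mirroring the vanishing of $\beta^{\ast}_{r+1}|_0$. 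Summing over $k = 1, \ldots, r$ yields $|\partial^{\ast}(\beta^{\ast} \wedge dz)|^2|_0 = 2r/M^6$. This uses that the coordinate $z_n$ appearing in $g_{r+1}$ is distinct from $z_1, \ldots, z_r$, which holds for the displayed choice of $g$; the factor $2$ is intrinsic to this cancellation pattern, and it is here that the Euclidean normalization of $|d\bar{z}_j|$ must be tracked most carefully.
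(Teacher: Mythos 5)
Your route is the same as the paper's: both assertions are read off from the explicit differentiation of $g^{\ast}_j = \bar{g}_j/|g|^2$, the fact (via \eqref{equ:chern-adjoint}) that $\partial^{\ast}$ acts componentwise as the flat adjoint, and bookkeeping of first $\bar{z}$-derivatives of the coefficients at the origin; your reduction $|\partial^{\ast}(\beta^{\ast}\wedge dz)|^2|_0 = \sum_{k,j,l}|\partial_{\bar{z}_j}C^{(k)}_l|^2|_0$ is exactly what the paper uses implicitly. For $1 \leqslant r \leqslant n-1$ your account is a correct compression of the paper's first case: two surviving derivatives of magnitude $1/M^3$ for each $k \leqslant r$, and the cancellation $1/M^3 + 1/M^3 - 2/M^3 = 0$ for $k = r+1$, all check out. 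The genuine gap is your closing sentence: the claim that the coordinate $z_n$ appearing in $g_{r+1}$ is distinct from $z_1,\ldots,z_r$ is \emph{false} when $r = n$, since then $g_r = z_n$ and $g_{r+1} = z_n + M$ share the variable $z_n$. The lemma's setting explicitly allows $1 \leqslant r \leqslant n$, and Theorem \ref{thm:main-inverse} needs $r = n$ (to control the sum of all $n$ eigenvalues); the paper devotes the entire second half of its proof to this case, where $\bar{\partial}|g|^2 = \sum_{l=1}^{n-1} z_l\, d\bar{z}_l + (2z_n+M)\, d\bar{z}_n$. (Your first assertion, $\beta^{\ast}|_0$, is unaffected: at the origin $\bar{\partial}|g|^2|_0 = M\, d\bar{z}_n$ for every $r$, so only the derivative-level computation breaks.)

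Moreover, this is not a case you could wave off as ``analogous'': the pattern you isolated genuinely changes at $r = n$. There, for $k = n$, the $d\bar{z}_n$-coefficient is (up to sign) $\left(|g|^2 - \bar{z}_n(2z_n+M)\right)/|g|^4$, whose numerator equals $M^2 \neq 0$ at the origin while its $\partial_{\bar{z}_n}$-derivative vanishes identically; the sole surviving term comes from differentiating $|g|^{-4}$ and equals $-2M^2(2z_n+M)/|g|^6\big|_0 = -2/M^3$, so that component contributes $4/M^6$, not $2/M^6$ — your ``exactly two derivatives of size $1/M^3$'' description fails. (For $k = n+1$ the numerator $|g|^2 - (\bar{z}_n+M)(2z_n+M)$ does vanish at $0$ together with its $\bar{\partial}$-derivative, so that component still gives $0$.) Carrying out the count yields $2(n-1)/M^6 + 4/M^6 = (2n+2)/M^6$ for $r = n$; this is in fact what the paper's own displayed derivatives in its second case produce, so the paper's concluding summation $\sum_{j=1}^{n} 2/M^6$ there contains a slip as well — harmless for the application, since Proposition \ref{prop:main} only uses that this quantity is $O(M^{-6})$ as $M \to \infty$ (with the constant $3r$ enlarged accordingly) — but it confirms that $r = n$ must be computed separately rather than deduced from your distinctness argument.
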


\begin{proof}
  Since $g^{\ast} = (\bar{g}_1 / | g |^2, \ldots, \bar{g}_{r+1} / | g
  |^2)$,   one has
  \begin{equation}
    \bar{\partial} \left( \frac{\bar{g}_j}{| g |^2} \right) = \frac{d
    \bar{z}_j}{| g |^2} - \frac{\bar{z}_j \left( \sum_{l = 1}^r z_l d
    \bar{z}_l + (z_n + M) d \bar{z}_n \right)}{| g |^4}, \label{equ:2nd-form1}
  \end{equation}
  for $1 \leqslant j
  \leqslant r$ and
  \begin{equation}
    \bar{\partial} \left( \frac{\bar{g}_{r + 1}}{| g |^2} \right) = \frac{d
    \bar{z}_n}{| g |^2} - \frac{(\bar{z}_n + M) \left( \sum_{l = 1}^r z_l d
    \bar{z}_l + (z_n + M) d \bar{z}_n \right)}{| g |^4} .
    \label{equ:2nd-form2}
  \end{equation}
  It follows from Proposition \ref{prop:gc-connection} that
  \[ \beta^{\ast}  |_0 = \left( \frac{1}{M^2}  d \bar{z}_1, \ldots,
     \frac{1}{M^2}  d \bar{z}_r, 0 \right) . \]
  For the second equality, we compute $\left.| \partial^{\ast}
  (\beta^{\ast}\wedge dz ) |^2 \right|_0$ separately in the case of $1 \leqslant r \leqslant n - 1$ and
  $r = n$.

  If $1 \leqslant r \leqslant n - 1$ and $1 \leqslant j \leqslant r$, one has
  \begin{eqnarray*}
    &  & \left| \partial^{\ast} \left( \bar{\partial} \left(
    \frac{\bar{g}_j}{| g |^2} \right) \wedge d z \right) \right|^2\\
    & = & \left| \partial^{\ast} \left( \frac{d \bar{z}_j \wedge d z}{| g
    |^2} - \frac{\bar{z}_j \left( \sum_{l = 1}^r z_l d \bar{z}_l + (z_n + M) d
    \bar{z}_n \right) \wedge d z}{| g |^4} \right) \right|^2\\
    & = & \left| \partial^{\ast} \left( \frac{| g |^2 - | z_j |^2}{| g |^4} d
    \bar{z}_j \wedge d z - \frac{\sum_{l \neq j, n} \bar{z}_j z_l}{| g |^4} d
    \bar{z}_l \wedge d z - \frac{\bar{z}_j (z_n + M)}{| g |^4} d \bar{z}_n
    \wedge d z \right) \right|^2\\
    & = & \left\{ \sum_{s = 1}^n \left[ \left| \frac{\partial}{\partial
    \bar{z}_s} \left( \frac{| g |^2 - | z_j |^2}{| g |^4} \right) \right|^2 +
    \right. \right.\\
    &  & \left. \left. + \sum_{l \neq j, n} \left| \frac{\partial}{\partial
    \bar{z}_s} \left( \frac{\bar{z}_j z_l}{| g |^4} \right) \right|^2 + \left|
    \frac{\partial}{\partial \bar{z}_s} \left( \frac{\bar{z}_j (z_n + M)}{| g
    |^4} \right) \right|^2 \right] \right\}.
  \end{eqnarray*}
  It then follows that
  \begin{eqnarray*}
    &  & \frac{\partial}{\partial \bar{z}_s} \left( \frac{| g |^2 - | z_j
    |^2}{| g |^4} \right)\\
    & = & \frac{\partial}{\partial \bar{z}_s} (| g |^2 - | z_j |^2)
    \frac{1}{| g |^4} + (| g |^2 - | z_j |^2) \frac{\partial}{\partial
    \bar{z}_s} \left( \frac{1}{| g |^4} \right)\\
    & = & \left\{\begin{array}{ll}
      \left( z_s - z_j \frac{\partial \bar{z}_j}{\partial \bar{z}_s} \right)
      \frac{1}{| g |^4} + (| g |^2 - | z_j |^2) \cdot \left( \frac{- 2 z_s}{|
      g |^6} \right), & (s \neq n) ;\\
      (z_n + M) \frac{1}{| g |^4} - (| g |^2 - | z_j |^2) \frac{2 (z_n + M)}{|
      g |^6}, & (s = n) .
    \end{array}\right.
  \end{eqnarray*}
  One also has
  \begin{eqnarray*}
    &  & \frac{\partial}{\partial \bar{z}_s} \left( \frac{\bar{z}_j z_l}{| g
    |^4} \right)\\
    & = & \frac{z_l}{| g |^4} \frac{\partial \bar{z}_j}{\partial \bar{z}_s} +
    \bar{z}_j z_l \frac{\partial}{\partial \bar{z}_s} \left( \frac{1}{| g |^4}
    \right)\\
    & = & \left\{\begin{array}{lll}
      \frac{z_l}{| g |^4} \frac{\partial \bar{z}_j}{\partial \bar{z}_s} -
      \bar{z}_j z_l \cdot \frac{2 z_s}{| g |^6}, & (s \neq n) ; & \\
      - \bar{z}_j z_l \frac{2 (z_n + M)}{| g |^6}, & (s = n) , &
    \end{array}\right.
  \end{eqnarray*}
  and
  \begin{eqnarray*}
    &  & \frac{\partial}{\partial \bar{z}_s} \left( \frac{\bar{z}_j (z_n +
    M)}{| g |^4} \right)\\
    & = & \frac{\partial}{\partial \bar{z}_s} (\bar{z}_j (z_n + M))
    \frac{1}{| g |^4} + \bar{z}_j (z_n + M) \frac{\partial}{\partial
    \bar{z}_s} \left( \frac{1}{| g |^4} \right)\\
    & = & \left\{\begin{array}{ll}
      \frac{(z_n + M)}{| g |^4} \frac{\partial \bar{z}_j}{\partial \bar{z}_s}
      - \bar{z}_j (z_n + M) \frac{2 z_s}{| g |^6}, & (s \neq n) ;\\
      \frac{\bar{z}_j}{| g |^4} - \bar{z}_j (z_n + M) \frac{2 (z_n + M)}{| g
      |^6}, & (s = n) .
    \end{array}\right.
  \end{eqnarray*}
  For $j = r + 1$, one has
  \begin{eqnarray*}
    &  & \left| \partial^{\ast} \left( \bar{\partial} \left( \frac{\bar{g}_{r
    + 1}}{| g |^2} \right) \wedge d z \right) \right|^2\\
    & = & \left| \partial^{\ast} \left( \frac{d \bar{z}_n \wedge d z}{| g
    |^2} - \frac{(\bar{z}_n + M) \left( \sum_{l = 1}^r z_l d \bar{z}_l + (z_n
    + M) d \bar{z}_n \right) \wedge d z}{| g |^4} \right) \right|^2\\
    & = & \left| \partial^{\ast} \left( \frac{ | g |^2 - | z_n + M |^2}{| g
    |^4} d \bar{z}_n \wedge d z - \frac{(\bar{z}_n + M) \left( \sum_{l = 1}^r
    z_l d \bar{z}_l \right) \wedge d z}{| g |^4} \right) \right|^2\\
    & = & \sum_{s = 1}^n \left[ \left| \frac{\partial}{\partial \bar{z}_s}
    \left( \frac{ | g |^2 - | z_n + M |^2}{| g |^4} \right) \right|^2 +
    \sum_{l = 1}^r \left| \frac{\partial}{\partial \bar{z}_s} \left(
    \frac{(\bar{z}_n + M) | \xi_l |^2 z_l}{| g |^4} \right) \right|^2 \right]
    .
  \end{eqnarray*}
  It then follows that
  \begin{eqnarray*}
    &  & \frac{\partial}{\partial \bar{z}_s} \left( \frac{ | g |^2 - | z_n +
    M |^2}{| g |^4} \right)\\
    & = & \frac{\partial}{\partial \bar{z}_s} (| g |^2 - | z_n + M |^2)
    \frac{1}{| g |^4} + (| g |^2 - | z_n + M |^2) \frac{\partial}{\partial
    \bar{z}_s} \left( \frac{1}{| g |^4} \right)\\
    & = & \left\{\begin{array}{ll}
      \frac{z_s}{| g |^4} - (| g |^2 - | z_n + M |^2) \frac{2 z_s}{| g |^6}, &
      (s \neq n) ;\\
      - (| g |^2 - | z_n + M |^2) \frac{2 (z_n + M)}{| g |^6}, & (s = n),
    \end{array}\right.
  \end{eqnarray*}
  and
  \begin{eqnarray*}
    &  & \frac{\partial}{\partial \bar{z}_s} \left( \frac{(\bar{z}_n + M)
    z_l}{| g |^4} \right)\\
    & = & \frac{\partial}{\partial \bar{z}_s} ((\bar{z}_n + M) z_l)
    \frac{1}{| g |^4} + (\bar{z}_n + M) z_l \frac{\partial}{\partial
    \bar{z}_s} \left( \frac{1}{| g |^4} \right)\\
    & = & \left\{\begin{array}{ll}
      - (\bar{z}_n + M) z_l \frac{2 z_s}{| g |^6}, & (s \neq n) ;\\
      \frac{z_l}{| g |^4} - (\bar{z}_n + M) z_l \frac{2 (z_n + M)}{| g |^6}, &
      (s = n) .
    \end{array}\right.
  \end{eqnarray*}

  Therefore, at $0$, one has
  \begin{eqnarray*}
    \left.| \partial^{\ast} (\beta^{\ast} \wedge d z) |^2 \right|_0 & = & \sum_{j = 1}^{r
    + 1} \left.\left| \partial^{\ast} \left( \bar{\partial} \left(
    \frac{\bar{g}_j}{| g |^2} \right) \wedge d z \right) \right|^2 \right|_0\\
    & = & \sum_{j = 1}^r \frac{2}{M^6}\\
    & = & \frac{2 r}{M^6} .
  \end{eqnarray*}
  If $r = n$, for $1 \leqslant j \leqslant n - 1$, one has
  \begin{eqnarray*}
    &  & \left| \partial^{\ast} \left( \bar{\partial} \left(
    \frac{\bar{g}_j}{| g |^2} \right) \wedge d z \right) \right|^2\\
    & = & \left| \partial^{\ast} \left( \frac{d \bar{z}_j \wedge d z}{| g
    |^2} - \frac{\bar{z}_j \left( \sum_{l = 1}^{n - 1} z_l d \bar{z}_l + (2
    z_n + M) d \bar{z}_n \right) \wedge d z}{| g |^4} \right) \right|^2\\
    & = & \left| \partial^{\ast} \left( \frac{| g |^2 - | z_j |^2}{| g |^4} d
    \bar{z}_j \wedge d z - \frac{\sum_{l \neq j, n} \bar{z}_j z_l}{| g |^4} d
    \bar{z}_l \right. \wedge d z \right. \\
    &  & \left. \left. - \frac{\bar{z}_j (2 z_n + M)}{| g |^4} d \bar{z}_n
    \wedge d z \right) \right|^2\\
    & = & \sum_{s = 1}^n \left[ \left| \frac{\partial}{\partial \bar{z}_s}
    \left( \frac{| g |^2 - | z_j |^2}{| g |^4} \right) \right|^2 + \right.\\
    &  & \left. + \sum_{l \neq j, n} \left| \frac{\partial}{\partial
    \bar{z}_s} \left( \frac{\bar{z}_j z_l}{| g |^4} \right) \right|^2 + \left|
    \frac{\partial}{\partial \bar{z}_s} \left( \frac{\bar{z}_j (2 z_n + M)}{|
    g |^4} \right) \right|^2 \right] .
  \end{eqnarray*}
  It then follows that
  \begin{eqnarray*}
    &  & \frac{\partial}{\partial \bar{z}_s} \left( \frac{| g |^2 - | z_j
    |^2}{| g |^4} \right)\\
    & = & \frac{\partial}{\partial \bar{z}_s} (| g |^2 - | z_j |^2)
    \frac{1}{| g |^4} + (| g |^2 - | z_j |^2) \frac{\partial}{\partial
    \bar{z}_s} \left( \frac{1}{| g |^4} \right)\\
    & = & \left\{\begin{array}{ll}
      \left( z_s - z_j \frac{\partial \bar{z}_j}{\partial \bar{z}_s} \right)
      \frac{1}{| g |^4} + (| g |^2 - | z_j |^2) \cdot \left( \frac{- 2 z_s}{|
      g |^6} \right), & (s \neq n) ;\\
      (2 z_n + M) \frac{1}{| g |^4} - (| g |^2 - | z_j |^2) \frac{2 (2 z_n +
      M)}{| g |^6}, & (s = n) .
    \end{array}\right.
  \end{eqnarray*}
  One also has
  \begin{eqnarray*}
    &  & \frac{\partial}{\partial \bar{z}_s} \left( \frac{\bar{z}_j z_l}{| g
    |^4} \right)\\
    & = & \frac{z_l}{| g |^4} \frac{\partial \bar{z}_j}{\partial \bar{z}_s} +
    \bar{z}_j z_l \frac{\partial}{\partial \bar{z}_s} \left( \frac{1}{| g |^4}
    \right)\\
    & = & \left\{\begin{array}{lll}
      \frac{z_l}{| g |^4} \frac{\partial \bar{z}_j}{\partial \bar{z}_s} -
      \bar{z}_j z_l \cdot \frac{2 z_s}{| g |^6}, & (s \neq n) ; & \\
      - \bar{z}_j z_l \frac{2 (2 z_n + M)}{| g |^6}, & (s = n) , &
    \end{array}\right.
  \end{eqnarray*}
  and
  \begin{eqnarray*}
    &  & \frac{\partial}{\partial \bar{z}_s} \left( \frac{\bar{z}_j (2 z_n +
    M)}{| g |^4} \right)\\
    & = & \frac{\partial}{\partial \bar{z}_s} (\bar{z}_j (2 z_n + M))
    \frac{1}{| g |^4} + \bar{z}_j (2 z_n + M) \frac{\partial}{\partial
    \bar{z}_s} \left( \frac{1}{| g |^4} \right)\\
    & = & \left\{\begin{array}{ll}
      \frac{(2 z_n + M)}{| g |^4} \frac{\partial \bar{z}_j}{\partial
      \bar{z}_s} - \bar{z}_j (2 z_n + M) \frac{2 z_s}{| g |^6}, & (s \neq n)
      ;\\
      - \bar{z}_j (2 z_n + M) \frac{2 (2 z_n + M)}{| g |^6}, & (s = n) .
    \end{array}\right.
  \end{eqnarray*}
  For $j = n$, one has
  \begin{eqnarray*}
    &  & \left| \partial^{\ast} \left( \bar{\partial} \left(
    \frac{\bar{g}_n}{| g |^2} \right) \wedge d z \right) \right|^2\\
    & = & \left| \partial^{\ast} \left( \frac{d \bar{z}_n \wedge d z}{| g
    |^2} - \frac{\bar{z}_n \left( \sum_{l = 1}^{n - 1} z_l d \bar{z}_l + (2
    z_n + M) d \bar{z}_n \right) \wedge d z}{| g |^4} \right) \right|^2\\
    & = & \left| \partial^{\ast} \left( \frac{ | g |^2 - \bar{z}_n (2 z_n +
    M)}{| g |^4} d \bar{z}_n \wedge d z - \frac{\bar{z}_n \left( \sum_{l =
    1}^{n - 1} z_l d \bar{z}_l \right) \wedge d z}{| g |^4} \right)
    \right|^2\\
    & = & \sum_{s = 1}^n \left[ \left| \frac{\partial}{\partial \bar{z}_s}
    \left( \frac{ | g |^2 - \bar{z}_n (2 z_n + M)}{| g |^4} \right) \right|^2
    + \sum_{l = 1}^{n - 1} \left| \frac{\partial}{\partial \bar{z}_s} \left(
    \frac{\bar{z}_n z_l}{| g |^4} \right) \right|^2 \right] .
  \end{eqnarray*}
  It then follows that
  \begin{eqnarray*}
    &  & \frac{\partial}{\partial \bar{z}_s} \left( \frac{ | g |^2 -
    \bar{z}_n (2 z_n + M)}{| g |^4} \right)\\
    & = & \frac{\partial}{\partial \bar{z}_s} (| g |^2 - \bar{z}_n (2 z_n +
    M)) \frac{1}{| g |^4}\\
    &  & + (| g |^2 - \bar{z}_n (2 z_n + M)) \frac{\partial}{\partial
    \bar{z}_s} \left( \frac{1}{| g |^4} \right)\\
    & = & \left\{\begin{array}{ll}
      \frac{z_s}{| g |^4} - (| g |^2 - \bar{z}_n (2 z_n + M)) \frac{2 z_s}{| g
      |^6}, & (s \neq n) ;\\
      - (| g |^2 - \bar{z}_n (2 z_n + M)) \frac{2 (2 z_n + M)}{| g |^6} & (s =
      n) ,
    \end{array}\right.
  \end{eqnarray*}
  and
  \begin{eqnarray*}
    &  & \frac{\partial}{\partial \bar{z}_s} \left( \frac{\bar{z}_n z_l}{| g
    |^4} \right)\\
    & = & \frac{\partial}{\partial \bar{z}_s} (\bar{z}_n z_l) \frac{1}{| g
    |^4} + \bar{z}_n z_l \frac{\partial}{\partial \bar{z}_s} \left( \frac{1}{|
    g |^4} \right)\\
    & = & \left\{\begin{array}{ll}
      - \bar{z}_n z_l \frac{2 z_s}{| g |^6} & (s \neq n) ;\\
      \frac{z_l}{| g |^4} - \bar{z}_n z_l \frac{2 (2 z_n + M)}{| g |^6}, & (s
      = n) .
    \end{array}\right.
  \end{eqnarray*}
  For $j = n + 1$, one has
  \begin{eqnarray*}
    &  & \left| \partial^{\ast} \left( \bar{\partial} \left( \frac{\bar{g}_{n
    + 1}}{| g |^2} \right) \wedge d z \right) \right|^2\\
    & = & \left| \partial^{\ast} \left( \frac{d \bar{z}_n \wedge d z}{| g
    |^2} - \frac{(\bar{z}_n + M) \left( \sum_{l = 1}^{n - 1} z_l d \bar{z}_l +
    (2 z_n + M) d \bar{z}_n \right) \wedge d z}{| g |^4} \right) \right|^2\\
    & = & \left| \partial^{\ast} \left( \frac{ | g |^2 - (\bar{z}_n + M) (2
    z_n + M)}{| g |^4} d \bar{z}_n \wedge d z - \frac{(\bar{z}_n + M) \left(
    \sum_{l = 1}^{n - 1} z_l d \bar{z}_l \right) \wedge d z}{| g |^4} \right)
    \right|^2\\
    & = & \sum_{s = 1}^n \left[ \left| \frac{\partial}{\partial \bar{z}_s}
    \left( \frac{ | g |^2 - (\bar{z}_n + M) (2 z_n + M)}{| g |^4} \right)
    \right|^2 \right.\\
    &  & \left. + \sum_{l = 1}^{n - 1} \left| \frac{\partial}{\partial
    \bar{z}_s} \left( \frac{(\bar{z}_n + M) z_l}{| g |^4} \right) \right|^2
    \right] .
  \end{eqnarray*}
  It then follows that
  \begin{eqnarray*}
    &  & \frac{\partial}{\partial \bar{z}_s} \left( \frac{ | g |^2 -
    (\bar{z}_n + M) (2 z_n + M)}{| g |^4} \right)\\
    & = & \frac{\partial}{\partial \bar{z}_s} (| g |^2 - (\bar{z}_n + M) (2
    z_n + M)) \frac{1}{| g |^4}\\
    &  & + (| g |^2 - (\bar{z}_n + M) (2 z_n + M)) \frac{\partial}{\partial
    \bar{z}_s} \left( \frac{1}{| g |^4} \right)\\
    & = & \left\{\begin{array}{ll}
      \frac{z_s}{| g |^4} - (| g |^2 - (\bar{z}_n + M) (2 z_n + M)) \frac{2
      z_s}{| g |^6}, & (s \neq n) ;\\
      - (| g |^2 - (\bar{z}_n + M) (2 z_n + M)) \frac{2 (2 z_n + M)}{| g |^6}
      & (s = n) ,
    \end{array}\right.
  \end{eqnarray*}
  and
  \begin{eqnarray*}
    &  & \frac{\partial}{\partial \bar{z}_s} \left( \frac{(\bar{z}_n + M)
    z_l}{| g |^4} \right)\\
    & = & \frac{\partial}{\partial \bar{z}_s} ((\bar{z}_n + M) z_l)
    \frac{1}{| g |^4} + (\bar{z}_n + M) z_l \frac{\partial}{\partial
    \bar{z}_s} \left( \frac{1}{| g |^4} \right)\\
    & = & \left\{\begin{array}{ll}
      (\bar{z}_n + M) z_l \frac{2 z_s}{| g |^6} & (s \neq n) ;\\
      \frac{z_l}{| g |^4} - (\bar{z}_n + M) z_l \frac{2 (2 z_n + M)}{| g |^6},
      & (s = n) .
    \end{array}\right.
  \end{eqnarray*}
  Therefore, at $0$, one also has
  \begin{eqnarray*}
    \left.| \partial_S^{\ast} (\beta^{\ast} \wedge d z) |^2 \right|_0 & = & \sum_{j =
    1}^{n + 1} \left.\left| \partial^{\ast} \left( \bar{\partial} \left(
    \frac{\bar{g}_j}{| g |^2} \right) \wedge d z \right) \right|^2 \right|_0\\
    & = & \sum_{j = 1}^n \frac{2}{M^6}
    = \frac{2 n}{M^6} \\ & = & \frac{2 r}{M^6}  .
  \end{eqnarray*}

\end{proof}

\subsection{The converse to $L^2$ division}

We give the proof of Theorem \ref{thm:main-inverse} in this subsection. In
the light of Proposition \ref{prop:dbar-division-equivalent}, it suffices to
prove the following proposition.

\begin{proposition}
  \label{prop:main}Let $D \subset \mathbb{C}^n$ be a bounded domain,
  $\varphi \in C^2 (D)$ and $r$ an integer between $1$ and $n$.
  If there exists a holomorphic function $f\in A^2(D,\varphi)$ such that for any
   $g = (g_1, \ldots, g_{r + 1}) \in (\mathcal{O} (D))^{\oplus (r +
  1)}$ with $g_1, g_2, \ldots, g_{r + 1}$ have no common zeros and
  $d g_1\wedge\cdots\wedge d g_r \neq 0$, and any smooth $\psi\in PSH(D)$ satisfying
  \[ i \partial \bar{\partial} \psi \otimes
     \tmop{Id}_{\underline{\mathbb{C}}^{r + 1}} + i \beta^{\ast} \wedge \beta
     > 0, \]
   with
  \[ \int_D \left( \left[ i \partial \bar{\partial} \psi \otimes
     \tmop{Id}_{\underline{\mathbb{C}}^{r + 1}} + i \beta^{\ast} \wedge \beta,
     \Lambda \right]^{- 1} \beta^{\ast} \wedge d z, \beta^{\ast}\wedge d z  \right)|f|^2 e^{- \varphi
     - \psi} d V < \infty, \]
  there exists $u$ valued in $S$ such that
  \[ \bar{\partial} u = \beta^{\ast} f \]
  and
  \begin{equation}
    \int_D | u |^2 e^{- \varphi - \psi} d V \leqslant \int_D \left( \left[ i
    \partial \bar{\partial} \psi \otimes \tmop{Id}_{\underline{\mathbb{C}}^{r
    + 1}} + i \beta^{\ast} \wedge \beta, \Lambda \right]^{- 1} \beta^{\ast} \wedge d z,
    \beta^{\ast} \wedge d z \right) |f|^2e^{- \varphi - \psi} d V,
    \label{equ:skoda-estimate}
  \end{equation}
then the sum of any $r$ eigenvalues of $i \partial
  \bar{\partial} \varphi$ is non-negative.
\end{proposition}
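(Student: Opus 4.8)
\subsection*{Proof plan for Proposition \ref{prop:main}}

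The plan is to prove the equivalent pointwise statement that at every point of $D$ the sum of any $r$ eigenvalues of $i\partial\bar{\partial}\varphi$ is non-negative. Since $i\partial\bar{\partial}\varphi$ is continuous and $\{f=0\}$ is nowhere dense, it is enough to argue at a point where $f\neq 0$. Fix such a point; after a translation and a unitary change of coordinates (which preserves the Euclidean $\omega$ and the admissible form of $g$) I may assume it is the origin $0\in D$, that $i\partial\bar{\partial}\varphi(0)$ is diagonal, and—reasoning by contradiction—that the $r$ chosen eigenvalues are $\varphi_{1\bar{1}}(0),\dots,\varphi_{r\bar{r}}(0)$ with $b:=\sum_{j=1}^{r}\varphi_{j\bar{j}}(0)<0$. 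I then take $g=(z_1,\dots,z_r,z_n+M)$ exactly as in Lemma \ref{lem:2nd-form-compute}: for $M$ large this has no common zeros on the bounded set $D$ and satisfies $dg_1\wedge\cdots\wedge dg_r=dz_1\wedge\cdots\wedge dz_r\neq 0$, so it is admissible, and Lemma \ref{lem:2nd-form-compute} supplies $\beta^{\ast}|_0=\tfrac{1}{M^2}(d\bar{z}_1,\dots,d\bar{z}_r,0)$ together with $|\partial^{\ast}(\beta^{\ast}\wedge dz)|^2|_0=\tfrac{2r}{M^6}$.

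The first key step is to convert the hypothesis into a dual inequality. By assumption (equivalently, by Proposition \ref{prop:dbar-division-equivalent} applied to the $L^2$ divisibility), for every admissible weight $\psi$ there is an $S$-valued $u$ solving $\bar{\partial}(u\,dz)=\beta^{\ast}f\wedge dz=:v$ with $\int_D|u|^2e^{-\varphi-\psi}dV\leqslant C$, where $C=\int_D\bigl([\,i\partial\bar{\partial}\psi\otimes\tmop{Id}+i\beta^{\ast}\wedge\beta,\Lambda\,]^{-1}\beta^{\ast}\wedge dz,\beta^{\ast}\wedge dz\bigr)|f|^2e^{-\varphi-\psi}dV$. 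By the H{\"o}rmander duality for $\bar{\partial}$, solvability with this bound forces the inequality $\bigl|\int_D\langle v,\alpha\rangle e^{-\varphi-\psi}dV\bigr|^2\leqslant C\,\|\bar{\partial}^{\ast}\alpha\|_{\varphi+\psi}^2$ for every compactly supported smooth $S$-valued $(n,1)$-form $\alpha$. My goal is to exhibit a single $\alpha$ violating this once $b<0$. The candidate is $\alpha=\rho\,(\beta^{\ast}\wedge dz)$ with $\rho$ a fixed cutoff equal to $1$ near $0$, localized by the Gaussian weight $\psi=\psi_t=t|z|^2$, which is smooth plurisubharmonic and, for $t$ large, makes $i\partial\bar{\partial}\psi_t\otimes\tmop{Id}+i\beta^{\ast}\wedge\beta>0$ in the sense of Nakano while keeping $I_{\varphi,\psi_t,g}(f)<\infty$. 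Because $e^{-t|z|^2}$ concentrates at $0$, the cutoff contributes only exponentially small errors and every integral becomes a Gaussian moment governed by the Taylor data at $0$.

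To estimate the denominator I apply the Bochner--Kodaira--Nakano identity (Proposition \ref{prop:bochner-kodaira}) to $S$ carrying the total metric $e^{-(\varphi+\psi_t)}$, whose curvature is $[\,i\partial\bar{\partial}(\varphi+\psi_t)\otimes\tmop{Id}+i\beta^{\ast}\wedge\beta,\Lambda\,]=[\,i\partial\bar{\partial}\varphi\otimes\tmop{Id},\Lambda\,]+B$ with $B:=[\,i\partial\bar{\partial}\psi_t\otimes\tmop{Id}+i\beta^{\ast}\wedge\beta,\Lambda\,]$. Since $D'_S\alpha=0$ on $(n,1)$-forms and $\|\bar{\partial}\alpha\|^2\geqslant 0$ may be discarded, this yields
\[
\|\bar{\partial}^{\ast}\alpha\|_{\varphi+\psi_t}^2\leqslant \|\partial^{\ast}\alpha\|^2+\int_D\langle[\,i\partial\bar{\partial}\varphi\otimes\tmop{Id},\Lambda\,]\alpha,\alpha\rangle e^{-\varphi-\psi_t}dV+\int_D\langle B\alpha,\alpha\rangle e^{-\varphi-\psi_t}dV,
\]
where I have used ${D'_S}^{\ast}=\partial^{\ast}$ from \eqref{equ:chern-adjoint}, so the first term is controlled by the $\tfrac{2r}{M^6}$ of Lemma \ref{lem:2nd-form-compute}. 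The $B$-term is the leading $\sim t$ contribution, and—this is the crucial point—because $\beta^{\ast}\wedge dz$ points exactly in the $d\bar{z}_1,\dots,d\bar{z}_r$ directions, the curvature term reduces by \eqref{equ:curvature-operator} to $b\cdot\int_D\rho^2|\beta^{\ast}\wedge dz|^2e^{-\varphi-\psi_t}dV$ at leading order, carrying precisely the sign of $b=\sum_{j=1}^{r}\varphi_{j\bar{j}}(0)$. Assembling the pieces, the squared numerator together with the $B$-part of the denominator reproduce the constant $C$ to leading order in $1/t$—reflecting the sharpness of Skoda's estimate—while the curvature term enters at the next order and, when $b<0$, pushes the ratio $\bigl|\int_D\langle v,\alpha\rangle e^{-\varphi-\psi_t}dV\bigr|^2/\|\bar{\partial}^{\ast}\alpha\|^2$ strictly above $C$; letting $t\to\infty$ and then $M\to\infty$ (to suppress the $\partial^{\ast}$ and $i\beta^{\ast}\wedge\beta$ contributions, smaller by factors $M^{-2}t^{-1}$ and $M^{-4}$) produces the forbidden violation, hence $b\geqslant 0$.

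The step I expect to be the main obstacle is exactly this final asymptotic comparison. Because the leading orders cancel identically, the whole argument lives in the \emph{subleading} $1/t$ coefficient, and one must verify that after accounting for the weighted-adjoint corrections implicit in the Bochner--Kodaira--Nakano identity, the Taylor expansions of $|f|^2e^{-\varphi}$ and of the $z$-dependent $\beta^{\ast}$, and the $i\beta^{\ast}\wedge\beta$ part of $B$, the common-mode contributions of the weight cancel between the numerator and $C$, leaving a net coefficient equal to a positive multiple of $-b$. In other words, only the sum $\sum_{j=1}^{r}\varphi_{j\bar{j}}(0)$ over the $r$ distinguished directions selected by $\beta^{\ast}$ should survive, with the correct sign. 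Making this bookkeeping rigorous, and justifying the ordered limits $t\to\infty$, $M\to\infty$ together with the exponential smallness of the cutoff terms, is the technical heart of the proof; the reduction, the choice of $g$, the dual inequality, and the application of Bochner--Kodaira--Nakano are the comparatively routine scaffolding around it.
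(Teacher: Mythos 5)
Your scaffolding coincides with the paper's proof almost step for step: the reduction to a point where $f(0)\neq 0$, the choice $g=(z_1,\ldots,z_r,z_n+M)$ feeding Lemma \ref{lem:2nd-form-compute}, the Cauchy--Schwarz duality inequality extracted from the hypothesis (the paper's \eqref{equ:key1}), the test form supported near $0$ proportional to $\beta^{\ast}f\wedge dz$, the Bochner--Kodaira--Nakano identity with $D'_S\alpha=0$ and ${D'_S}^{\ast}=\partial^{\ast}$, the Gaussian weight concentrating at the origin, and the ordered limits (weight parameter $\to\infty$, then $M\to\infty$). But there is a genuine gap at exactly the place you flag: the decisive final comparison is asserted, not proved, and the mechanism you envision for it --- an identical cancellation at leading order followed by extraction of a subleading $1/t$ coefficient via Taylor expansions of $|f|^2e^{-\varphi}$ and $\beta^{\ast}$ --- is not obviously executable as stated. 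With your $\alpha=\rho\,\beta^{\ast}\wedge dz$ (note also the missing factor $f$, without which the numerator $\int_D\rho f|\beta^{\ast}|^2e^{-\varphi-\psi_t}dV$ is complex and only asymptotically comparable to the positive quantity you need), the leading match between $|\llangle v,\alpha\rrangle|^2$ and $C\cdot\llangle B\alpha,\alpha\rrangle$ is merely an asymptotic near-equality in a Cauchy--Schwarz inequality; quantifying its defect to order $1/t$, uniformly as $M\to\infty$, is precisely the unresolved bookkeeping, so the proposal stops short of a proof.

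The device the paper uses to avoid all of this --- and which your sketch lacks --- is the splitting \eqref{equ:def-k0}: let $k_0$ be the least $c>0$ with $c\,i\partial\bar{\partial}|z|^2\otimes\tmop{Id}+i\beta^{\ast}\wedge\beta>0$, set $k_\varepsilon=k_0+\varepsilon$ and $\psi=(k+k_\varepsilon)(|z|^2-\delta^2/4)$. Then (a) the inexplicit Skoda constant is replaced by the exact operator inequality $\left[i(k+k_\varepsilon)\partial\bar{\partial}|z|^2\otimes\tmop{Id}+i\beta^{\ast}\wedge\beta,\Lambda\right]^{-1}<\left[ik\partial\bar{\partial}|z|^2\otimes\tmop{Id},\Lambda\right]^{-1}$, giving $\|u\|^2_{\varphi+\psi}<\frac{1}{k}\int_D|\beta^{\ast}|^2|f|^2e^{-\varphi-\psi}dV$, and the test form becomes the explicit $\alpha=\chi\frac{1}{k}\beta^{\ast}f\wedge dz$ (the inverse of the dominant part only, so $\partial^{\ast}\alpha$ is computable); (b) the would-be delicate cancellation reduces to comparing $\frac{k}{k^2}\bigl(\int\chi A\bigr)^2/\int A$ with $\frac{k+k_\varepsilon}{k^2}\int\chi^2 A$ for $A=|\beta^{\ast}|^2|f|^2e^{-\varphi-\psi}$, the mismatch being the explicit small term $k_\varepsilon\int|\chi\beta^{\ast}|^2|f|^2$ plus a cutoff error killed because $k e^{-\psi}\to 0$ off $B(0,\delta/2)$ while the $-\delta^2/4$ normalization makes $\int_{B(0,\delta/2)}e^{-\varphi-\psi}dV\to\infty$; and (c) no Taylor expansion is needed at all --- mere continuity at $0$, combined with the exact values $\frac{2r}{M^6}$ and $\beta^{\ast}|_0$ from Lemma \ref{lem:2nd-form-compute}, bounds the curvature term by the negative quantity $\bigl(\sum_j\lambda_j\bigr)\frac{1}{2M^4}|f(0)|^2$ and the $\partial^{\ast}$-term by $\frac{3r}{M^6}|f(0)|^2$ on a small ball, after which $k_0\to 0$ as $M\to\infty$ absorbs the junk $2(k_0+\varepsilon)r+\frac{3r}{M^2}$. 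Your remark that the surviving coefficient is ``a positive multiple of $-b$'' is thus not quite right either: it is $-b$ contaminated by these $M$-dependent terms, and the contradiction only emerges after the specific ordered choices of $M$, $\varepsilon$, and $k$. In short, the route is the paper's, but the technical heart you defer is exactly what the $k_0$-splitting is designed to circumvent, and without it (or an equivalent) the argument does not close.
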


\begin{proof}
  We argue by contradiction.
  Suppose at some point $z_0\in D$, there exists $r$ eigenvalues $\lambda_1, \lambda_2,
  \ldots, \lambda_r$ of $i \partial \bar{\partial} \varphi$ such that
  \[ \lambda_1 + \lambda_2 + \cdots + \lambda_r < 0. \]
  Without loss of generality, we may assume that $z_0$ is the origin point.
  Denote by $\xi_1, \xi_2, \ldots, \xi_r$ the unit eigenvectors corresponding
  to $\lambda_1, \lambda_2, \ldots, \lambda_r$ respectively. After a linear transformation, we may assume that
  \[ \xi_j = (0, \ldots, 1, \ldots, 0), \]
  i.e. the $j$-th component of $\xi_j$ is $1$ and other components of $\xi_j$
  are zero. Regard $\xi_k$
  $(1 \leqslant k \leqslant r)$ as $(n, 1)$ forms, it follows from \eqref{equ:curvature-operator} that when acting on $(n, 1)$ forms, the
  eigenvalues of $[i \partial \bar{\partial} \varphi, \Lambda]$ coincide with
  that of $i \partial \bar{\partial} \varphi$. Therefore, denote by $e_1,
  \ldots, e_{r+1}$ an orthonormal basis of $\underline{\mathbb{C}}^{r+1}$, we have
  \begin{eqnarray*}
    &  & \left\langle \left[ i \partial \bar{\partial} \varphi \otimes
    \tmop{Id}_{\underline{\mathbb{C}}^{r + 1}}, \Lambda \right] \sum_{j = 1}^r
    \xi_j \otimes e_j, \sum_{j = 1}^r \xi_j \otimes e_j \right\rangle\\
    & = & \sum_{j = 1}^r \langle [i \partial \bar{\partial} \varphi, \Lambda]
    \xi_j, \xi_j \rangle\\
    & = & \sum^r_{j = 1} \lambda_j\\
    & < & 0.
  \end{eqnarray*}
  Take
  \[ g_1 = z_1,\  g_2 = z_2, \ldots,\  g_r = z_r,\  g_{r + 1} = z_n + M, \]
  where $M$ is a real constant such that $M > \sup_{z \in D} | z |$.
   Since $| g |^2$ has
  no zero in $\overline{D}$ and
  \[ \beta^{\ast} = \left( \bar{\partial} \left( \frac{\bar{g}_1}{| g |^2}
     \right), \ldots, \bar{\partial} \left( \frac{\bar{g}_{r + 1}}{| g |^2}
     \right) \right), \]
      it
  follows that every component of $\beta^{\ast}$ is smooth on $\overline{D}$.
   Therefore, we may find  positive number
  \begin{equation}
    k_0 = \inf \left\{ c > 0;\quad  c\cdot i \partial \bar{\partial}   | z |^2 \otimes
    \tmop{Id}_{\underline{\mathbb{C}}^{r + 1}} + i \beta^{\ast} \wedge \beta >
    0 \ \text{on}\  D\right\} \label{equ:def-k0}
  \end{equation}
   and set $k_{\varepsilon} = k_0 + \varepsilon$ for an arbitrary
  constant $\varepsilon > 0$.

  We choose $\psi = (k + k_{\varepsilon}) (| z |^2 - {\delta}^2 / 4)$ where $k$ and
  ${\delta}$ are positive constants. Without loss of generality, we may assume that $f(0)\neq 0$. By our choice of $g$ and $\psi$,
  \[
     \int_D \left( \left[ i \partial \bar{\partial} \psi \otimes
     \tmop{Id}_{\underline{\mathbb{C}}^{r + 1}} + i \beta^{\ast} \wedge \beta,
     \Lambda \right]^{- 1} \beta^{\ast} \wedge d z, \beta^{\ast}\wedge d z  \right)|f|^2 e^{- \varphi
     - \psi} d V < \infty.
  \]
    By hypothesis, one can find $S$ valued $u$ such that
  \[ \bar{\partial} u = \beta^{\ast} f \]
  and
  \[ \int_D | u |^2 e^{- \varphi - \psi} d V \leqslant      \int_D \left( \left[ i \partial \bar{\partial} \psi \otimes
     \tmop{Id}_{\underline{\mathbb{C}}^{r + 1}} + i \beta^{\ast} \wedge \beta,
     \Lambda \right]^{- 1} \beta^{\ast} \wedge d z, \beta^{\ast}\wedge d z  \right)|f|^2 e^{- \varphi
     - \psi} d V. \]
  It follows that for any $S$ valued $(n,1)$ form $\alpha$ with compact support in $D$,
  \begin{eqnarray*}
    &  & \llangle \beta^{\ast} f \wedge d z, \alpha \rrangle^2_{\varphi + \psi}\\
    & = & \llangle \bar{\partial} u \wedge d z, \alpha \rrangle^2_{\varphi + \psi}\\
    & = & \llangle u\wedge d z, \bar{\partial}^{\ast}_S \alpha \rrangle^2_{\varphi +
    \psi}\\
    & \leqslant &  \| \bar{\partial}_S^{\ast}\alpha \|^2_{\varphi + \psi}
                \| u \|^2_{\varphi + \psi}\\
    & \leqslant &\| \bar{\partial}_S^{\ast} \alpha \|^2_{\varphi + \psi} \cdot \int_D \left\langle \left[ ((k+ k_{\varepsilon})i  \partial \bar{\partial}   | z |^2) \otimes
    \tmop{Id}_{\underline{\mathbb{C}}^{r + 1}} + i \beta^{\ast} \wedge \beta,
    \Lambda \right]^{- 1} \beta^{\ast} \wedge d z, \beta^{\ast} \wedge d z \right\rangle |f|^2 e^{-
    \varphi - \psi} d V \\
    & < & \| \bar{\partial}_S^{\ast} \alpha \|^2_{\varphi + \psi} \cdot \int_D \left\langle \left[ i k \partial \bar{\partial} | z |^2
    \otimes \tmop{Id}_{\underline{\mathbb{C}}^{r + 1}}, \Lambda \right]^{- 1}
    \beta^{\ast} \wedge d z, \beta^{\ast} \wedge d z \right\rangle |f|^2 e^{- \varphi - \psi} d V\\
    & = &\| \bar{\partial}_S^{\ast} \alpha \|^2_{\varphi + \psi}\cdot \frac{1}{k} \int_D | \beta^{\ast}  |^2 |f|^2 e^{- \varphi - \psi} d V,
  \end{eqnarray*}
  that is,
  \begin{equation}
    \left( \int_D \langle \beta^{\ast} f \wedge d z, \alpha \rangle e^{- \varphi - \psi}
    d V \right)^2 < \frac{1}{k} \int_D | \beta^{\ast}  |^2 |f|^2 e^{- \varphi -
    \psi} d V \cdot \int_D | \bar{\partial}^{\ast}_S \alpha |^2 e^{- \varphi -
    \psi} d V. \label{equ:key1}
  \end{equation}

  Let $ K \Subset K'$ be two bounded relatively compact open subsets of $D$ and assume that $0\in K$.
  Choose a  cut-off function $\chi$ such that
  \[
  \begin{cases}
    0 \leqslant \chi \leqslant 1, \\
    \chi = 1 \text{ on } K, \\
    \chi = 0 \text{ on } D \setminus K'.
    \end{cases}
    \]
  We then take
  \[ \alpha = \chi \left[ i k \partial \bar{\partial} | z |^2 \otimes
     \tmop{Id}_{\underline{\mathbb{C}}^{r + 1}}, \Lambda \right]^{- 1}
     \beta^{\ast} f\wedge d z = \chi \frac{1}{k} \beta^{\ast} f\wedge d z. \]
  It follows from \eqref{equ:key1} that
  \begin{eqnarray*}
     \frac{1}{k^2} \left( \int_D \chi | \beta^{\ast}|^2| f |^2 e^{- \varphi
    - \psi} d V \right)^2
     < \frac{1}{k} \int_D | \beta^{\ast}|^2| f |^2 e^{- \varphi - \psi} d V
    \cdot \int_D | \bar{\partial}^{\ast}_S \alpha |^2 e^{- \varphi - \psi} d
    V,
  \end{eqnarray*}
  i.e.
  \begin{equation}
    \frac{1}{k} \frac{\left( \int_D \chi | \beta^{\ast}|^2| f |^2 e^{- \varphi -
    \psi} d V \right)^2}{\int_D | \beta^{\ast}|^2| f |^2 e^{- \varphi - \psi} d V}
    < \int_D | \bar{\partial}^{\ast}_S \alpha |^2 e^{- \varphi - \psi} d V.
    \label{equ:key2}
  \end{equation}
  By the Bochner-Kodaira-Nakano identity,
  \begin{eqnarray*}
    &  & \| \bar{\partial}^{\ast}_S \alpha \|^2_{\varphi + \psi} + \|
    \bar{\partial} \alpha \|^2_{\varphi + \psi}\\
    & = & \left\langle\!\left\langle [i \Theta (S), \Lambda] \alpha, \alpha \right\rangle\!\right\rangle_{\varphi +
    \psi} + \| D'_S \alpha \|^2_{\varphi + \psi} + \| \partial^{\ast}
    \alpha \|^2_{\varphi + \psi}\\
    & = & \left\langle\!\!\left\langle \left[ i (k + k_{\varepsilon}) \partial
    \bar{\partial} | z |^2 \otimes \tmop{Id}_{\underline{\mathbb{C}}^{r + 1}},
    \Lambda \right] \alpha, \alpha \right\rangle\!\!\right\rangle_{\varphi + \psi} +
    \left\langle\!\!\left\langle \left[ i \partial \bar{\partial} \varphi \otimes
    \tmop{Id}_{\underline{\mathbb{C}}^{r + 1}}, \Lambda \right] \alpha, \alpha
    \right\rangle\!\!\right\rangle_{\varphi + \psi}\\
    &  & + \left\langle\!\left\langle [i \beta \wedge \beta^{\ast}, \Lambda] \alpha, \alpha
    \right\rangle\!\right\rangle_{\varphi + \psi} + \| \partial^{\ast} \alpha \|^2_{\varphi +
    \psi} .
  \end{eqnarray*}
  Since $[i \beta^{\ast} \wedge \beta, \Lambda]$ is negative definite on $(n,
  1)$ forms, one has
  \begin{eqnarray*}
    \| \bar{\partial}_S^{\ast} \alpha \|^2_{\varphi + \psi}& \leqslant & \left\langle\!\!\left\langle \left[ i (k + k_{\varepsilon}) \partial
    \bar{\partial} | z |^2 \otimes \tmop{Id}_{\underline{\mathbb{C}}^{r + 1}},
    \Lambda \right] \alpha, \alpha \right\rangle\!\!\right\rangle_{\varphi + \psi}\\
    &  & + \left\langle\!\!\left\langle \left[ i \partial \bar{\partial} \varphi \otimes
    \tmop{Id}_{\underline{\mathbb{C}}^{r + 1}}, \Lambda \right] \alpha, \alpha
    \right\rangle\!\!\right\rangle_{\varphi + \psi}\\
    & & + \| \partial^* \alpha
    \|^2_{\varphi + \psi},
  \end{eqnarray*}
  that is
  \begin{equation}\label{equ:key4s}
    \begin{split}
       \| \bar{\partial}_S^{\ast} \alpha \|^2_{\varphi + \psi}  \leqslant&  \frac{k + k_{\varepsilon}}{k^2} \int_D | \chi
    \beta^{\ast}|^2| f |^2 e^{- \varphi - \psi} d V \\
         & + \frac{1}{k^2} \int_D \left\langle \left[ i \partial \bar{\partial}
    \varphi \otimes \tmop{Id}_{\underline{\mathbb{C}}^{r + 1}}, \Lambda
    \right] \chi \beta^{\ast} \wedge d z, \chi \beta^{\ast} \wedge d z \right\rangle |f|^2 e^{-
    \varphi - \psi} d V \\
         & + \frac{1}{k^2} \int_D | \partial^{\ast} (\chi \beta^{\ast}\wedge d z )f
    |^2 e^{- \varphi - \psi} d V.
    \end{split}
  \end{equation}
  Combining \eqref{equ:key4s} with \eqref{equ:key2}, one has
  \begin{eqnarray*}
    \frac{1}{k} \frac{\left( \int_D \chi | \beta^{\ast} |^2|f |^2 e^{-
    \varphi - \psi} d V \right)^2}{\int_D | \beta^{\ast}|^2| f |^2 e^{- \varphi -
    \psi} d V}& < & \frac{k + k_{\varepsilon}}{k^2} \int_D | \chi \beta^{\ast}|^2| f |^2
    e^{- \varphi - \psi} d V\\
    &  & + \frac{1}{k^2} \int_D \left\langle \left[ i \partial \bar{\partial}
    \varphi \otimes \tmop{Id}_{\underline{\mathbb{C}}^{r + 1}}, \Lambda
    \right] \chi \beta^{\ast} \wedge d z, \chi \beta^{\ast} \wedge d z \right\rangle|f|^2 e^{-
    \varphi - \psi} d V\\
    &  & + \frac{1}{k^2} \int_D | \partial^{\ast} (\chi \beta^{\ast} \wedge d z)f
    |^2 e^{- \varphi - \psi} d V.
  \end{eqnarray*}
  It follows that
  \begin{eqnarray}
    &  & \int_D | \partial^{\ast} (\chi \beta^{\ast} \wedge d z)f |^2 e^{- \varphi
    - \psi} d V + \int_D \left\langle \left[ i \partial \bar{\partial} \varphi
    \otimes \tmop{Id}_{\underline{\mathbb{C}}^{r + 1}}, \Lambda \right] \chi
    \beta^{\ast} \wedge d z, \chi \beta^{\ast} \wedge d z \right\rangle |f|^2 e^{- \varphi - \psi} d
    V \nonumber\\
    & > & \frac{k \left( \int_D \chi | \beta^{\ast}|^2| f |^2 e^{- \varphi -
    \psi} d V \right)^2}{\int_D | \beta^{\ast}|^2| f |^2 e^{- \varphi - \psi} d V}
    - (k + k_{\varepsilon}) \int_D  | \chi \beta^{\ast} |^2|f |^2 e^{- \varphi -
    \psi} d V  \label{key-estimate}\\
    & \geqslant & \frac{k \int_D \chi | \beta^{\ast} |^2|f |^2 e^{- \varphi -
    \psi} d V \left( \int_D \chi | \beta^{\ast}|^2| f |^2 e^{- \varphi - \psi} d
    V - \int_D | \beta^{\ast} |^2|f |^2 e^{- \varphi - \psi} d V \right)}{\int_D |
    \beta^{\ast}|^2| f |^2 e^{- \varphi - \psi} d V} \nonumber\\
    &  & - k_{\varepsilon} \int_D | \chi \beta^{\ast}|^2| f |^2 e^{- \varphi -
    \psi} d V. \nonumber
  \end{eqnarray}
  Take $\delta$ small enough such that $B(0,{\delta}/{2})$ is contained in open set  $K$. Since
  \[ \lim_{k \rightarrow \infty} e^{- \psi + \log k} = \lim_{k \rightarrow \infty} e^{
      - (k + k_{\varepsilon}) (| z |^2 - {\delta}^2 / 4) + \log k} = 0 \]
  for $z \in D \setminus B (0, {\delta} / 2)$, we can choose $k$ large enough such
  that
  \begin{eqnarray*}
    &  & k \left( \int_D \chi | \beta^{\ast}|^2| f |^2 e^{- \varphi - \psi} d V
    - \int_D | \beta^{\ast}|^2| f |^2 e^{- \varphi - \psi} d V \right)\\
    & = & \int_D (\chi - 1) | \beta^{\ast}|^2| f |^2 e^{- \varphi - \psi + \log
    k} d V\\
    & = & \int_{D\backslash B (0, \delta / 2)} (\chi - 1) | \beta^{\ast}|^2| f |^2
    e^{- \varphi - \psi + \log k} d V\\
    & > & - \varepsilon.
  \end{eqnarray*}
  Therefore, it follows from \eqref{key-estimate} that
  \begin{eqnarray}
    &  & \int_{B (0, \delta / 2)} \left\langle \left[ i \partial \bar{\partial}
    \varphi \otimes \tmop{Id}_{\underline{\mathbb{C}}^{r + 1}}, \Lambda
    \right] \beta^{\ast} \wedge d z, \beta^{\ast} \wedge d z \right\rangle |f|^2 e^{- \varphi - \psi}
    d V \nonumber\\
    &  & + \int_{B (0, \delta / 2)} | \partial^{\ast} (\beta^{\ast}\wedge d z )f |^2 e^{-
    \varphi - \psi} d V \nonumber\\
    & > & - \varepsilon - k_{\varepsilon} \int_{B (0, \delta / 2)} | \beta^{\ast}|^2|
    f |^2 e^{- \varphi - \psi} d V.  \label{equ:key-estimate1}
  \end{eqnarray}
  It follows from Lemma \ref{lem:2nd-form-compute} that
  \[ \beta^{\ast}  |_0 = \left( \frac{1}{M^2}   d \bar{z}_1,
     \ldots, \frac{1}{M^2}   d \bar{z}_r, 0 \right), \]
  thus we can choose $\delta > 0$ small enough such that on $B (0, \delta)$,
  \begin{equation}
    | \beta^{\ast}|^2| f |^2 < \frac{2 r}{M^4} | f (0) |^2 \label{equ:estimate1}.
  \end{equation}
  Since at the origin point $0$,
  \begin{equation*}
    \left\langle \left[ i \partial \bar{\partial} \varphi \otimes
    \tmop{Id}_{\underline{\mathbb{C}}^{r + 1}}, \Lambda \right] \chi
    \beta^{\ast} \wedge d z, \chi \beta^{\ast} \wedge d z \right\rangle |f|^2
    =  \left(\sum_{j=1}^{r}\lambda_j\right)\frac{1}{M^4} | f (0) |^2,
  \end{equation*}
  we may also assume that on $B (0, \delta)$,
  \begin{equation}
    \left\langle \left[ i \partial \bar{\partial} \varphi \otimes
    \tmop{Id}_{\underline{\mathbb{C}}^{r + 1}}, \Lambda \right] \chi
    \beta^{\ast} \wedge d z, \chi \beta^{\ast} \wedge d z \right\rangle |f|^2
    <  \left(\sum_{j=1}^{r}\lambda_j\right)\frac{2}{M^4} | f (0) |^2.  \label{equ:neg-estimate}
  \end{equation}

  On $B (0, \delta)$, we have $\partial^{\ast} (\chi \beta^{\ast} f\wedge d z)
  = \partial^{\ast} (\beta^{\ast} f\wedge d z)$.
  It follows from Lemma \ref{lem:2nd-form-compute} that for all $1 \leqslant r
  \leqslant n$,
  \begin{eqnarray*}
    \left.| \partial^{\ast} (\beta^{\ast} f) |^2 \right|_0 & = & \sum_{j = 1}^{r + 1}
    \left.\left| \partial^{\ast} \left( \bar{\partial} \left( \frac{\bar{g}_j}{| g
    |^2} \right) dz \right) f\right|^2 \right|_0\\
    & = & \sum_{j = 1}^r \frac{2}{M^6} | f (0) |^2\\
    & = &   \frac{2 r}{M^6} | f (0) |^2.
  \end{eqnarray*}
  Therefore, by shrinking $B (0, \delta)$ if necessary, we may always assume that
  on $B (0, \delta)$,
  \[ | \partial^{\ast} (\chi \beta^{\ast}\wedge d z)f |^2 \leqslant \frac{3 r}{M^6}
     | f (0) |^2 . \]
  Combining the computations above, \eqref{equ:neg-estimate} and
  \eqref{equ:estimate1}, it follows that
  \begin{eqnarray*}
    &  & \int_{B (0, \delta / 2)} \left\langle \left[ i \partial \bar{\partial}
    \varphi \otimes \tmop{Id}_{\underline{\mathbb{C}}^{r + 1}}, \Lambda
    \right] \beta^{\ast} \wedge dz, \beta^{\ast} \wedge dz \right\rangle |f|^2 e^{- \varphi - \psi}
    d V\\
    & < & \left(\sum_{j=1}^{r}\lambda_j\right)\frac{1}{2M^4} | f (0) |^2 \int_{B (0, \delta / 2)} e^{- \varphi -
    \psi} d V,
  \end{eqnarray*}
  \[ \int_{B (0, \delta / 2)} | \partial^{\ast} (\beta^{\ast} \wedge d z)f |^2 e^{- \varphi
     - \psi} d V < \frac{3 r}{M^6} | f (0) |^2 \int_{B (0, \delta / 2)} e^{-
     \varphi - \psi} d V, \]
  and
  \[ \int_{B (0, \delta / 2)} | \beta^{\ast}|^2| f |^2 e^{- \varphi - \psi} d V <
     \frac{2 r}{M^4} | f (0) |^2 \int_{B (0, \delta / 2)} e^{- \varphi - \psi} d V.
  \]
  It then follows from \eqref{equ:key-estimate1} that
  \begin{eqnarray*}
    &  & \left(\sum_{j=1}^{r}\lambda_j\right)\frac{2}{M^4} | f (0) |^2 \int_{B (0, \delta / 2)} e^{- \varphi - \psi}
    d V + \frac{3 r}{M^6} | f (0) |^2 \int_{B (0, \delta / 2)} e^{- \varphi - \psi}
    d V\\
    & > & \int_{B (0, \delta / 2)} \left\langle \left[ i \partial \bar{\partial}
    \varphi \otimes \tmop{Id}_{\underline{\mathbb{C}}^{r + 1}}, \Lambda
    \right] \beta^{\ast} \wedge d z, \beta^{\ast} \wedge d z \right\rangle |f|^2 e^{- \varphi - \psi}
    d V\\
    &  & + \int_{B (0, \delta / 2)} | \partial^{\ast} (\beta^{\ast}\wedge d z )f |^2 e^{-
    \varphi - \psi} d V\\
    & > & - \varepsilon- k_{\varepsilon} \int_{B (0, \delta / 2)} | \beta^{\ast} |^2|f |^2 e^{-
    \varphi - \psi} d V \\
    & > & - \varepsilon- k_{\varepsilon} \frac{2 r}{M^4} | f (0) |^2 \int_{B (0, \delta / 2)}
    e^{- \varphi - \psi} d V ,
  \end{eqnarray*}
  equivalently,
  \begin{equation*}
    \frac{\varepsilon M^4}{| f (0) |^2 \int_{B (0, \delta / 2)} e^{- \varphi -
    \psi} d V} + 2 (k_0 + \varepsilon) r + \frac{3 r}{M^2} > -2 \sum_{j
    = 1}^r \lambda_j>0.
  \end{equation*}
    We remark that $k_0 \rightarrow 0$ as $M \rightarrow \infty$. Indeed, it
  follows from $\eqref{equ:2nd-form1}$ and \eqref{equ:2nd-form2} that if $M
  \rightarrow \infty$,  $\beta^{\ast} \wedge \beta$ tend to zero and by the
  definition of $k_0$, one has $k_0 \rightarrow 0$ as $M \rightarrow \infty$.
  We may choose $M$ large enough and $\varepsilon>0$ small enough such that
  \[
  2 (k_0 + \varepsilon) r + \frac{3 r}{M^2}<- \frac{1}{6} \sum_{j
    = 1}^r \lambda_j.
  \]
  For the fixed $M$ and $\varepsilon$ as above, we can choose $k$ large enough such that
  \[
  \frac{\varepsilon M^4}{| f (0) |^2 \int_{B (0, \delta / 2)} e^{- \varphi -
    \psi} d V}<- \frac{1}{6} \sum_{j
    = 1}^r \lambda_j,
  \]
  since
  \[ \int_{B (0, \delta / 2)} e^{- \varphi - \psi} d V = \int_{B (0, \delta / 2)} e^{-
     \varphi - (k + k_{\varepsilon}) (| z |^2 - R^2 / 4)} d V \rightarrow
     \infty, \]
  as $k \rightarrow \infty$. Thus we have
  \[
  - 2 \sum_{j
    = 1}^r \lambda_j<\frac{\varepsilon M^4}{| f (0) |^2 \int_{B (0, \delta / 2)} e^{- \varphi -
    \psi} d V} + 2 (k_0 + \varepsilon) r + \frac{3 r}{M^2}<- \frac{1}{3} \sum_{j
    = 1}^r \lambda_j,
  \]
this is a contradiction.
\end{proof}

\begin{remark}
In \cite{deng-ning-wang2021} and \cite{deng-ning-wang-zhou2023}, the property of optimal $L^p$ estimates
and the property of optimal $L^p$ extensions are introduced. It is shown that both the properties of $L^2$ estimates and
$L^2$ extensions are able to characterize plurisubharmonic functions.
Combined with Theorem \ref{thm:2-division}, plurisubharmonic functions can be characterized by the optimal $L^2$ estimate property,
the optimal $L^2$ extension property and the existence of $f\in A^2(D,\varphi)$ which is $L^2$ divisible with respect to $\varphi$. In view of \cite{ohsawa2002} (see also \cite{demailly-smallbook}), certain types of $L^2$ division theorem can be deduced from the $L^2$ extension theorem under appropriate curvature conditions. It seems that the the existence of $f\in A^2(D,\varphi)$ which is $L^2$ divisible with respect to $\varphi$ is a relatively weak condition in characterizing plurisubharmonicity.

  It also follows from Theorem \ref{thm:2-division} that if a bounded domain $D\subset\mathbb{C}^n$ admits a smooth exhaustion function $\varphi$ such that there exists $f\in A^2(D,\varphi)$ which is $L^2$ divisible with respect to $\varphi$, then
  $D$ is pseudoconvex. In particular, $D$ admits a complete K\"{a}hler metric.
\end{remark}

\end{sloppypar}
\end{document}